\newtheorem{teo}{Theorem}[section]
\newtheorem{prop}[teo]{Proposition}
\newtheorem{cor}[teo]{Corollary}
\newtheorem{pro}[teo]{Problem}
\newtheorem{algo}[teo]{Algorithm}
\newtheorem{rem}[teo]{Remark}
\newtheorem{ejem}[teo]{Example}
\newcommand{\N}{\mathbb N}
\newcommand{\R}{\mathbb R}
\renewcommand{\H}{\mathcal{H}}
\newcommand{\G}{\mathcal G}
\newcommand{\sH}{\mathsf H}
\newcommand{\sG}{\mathsf G}
\newcommand{\sx}{\mathsf x}
\newcommand{\sr}{\mathsf r}
\newcommand{\sg}{\mathsf g}
\newcommand{\sy}{\mathsf y}
\newcommand{\sq}{\mathsf q}
\newcommand{\su}{\mathsf u}
\newcommand{\sA}{\mathsf A}
\newcommand{\sB}{\mathsf B}
\newcommand{\sC}{\mathsf C}
\newcommand{\sD}{\mathsf D}
\newcommand{\sT}{\mathsf T}
\newcommand{\sM}{\mathsf M}
\newcommand{\sN}{\mathsf N}
\newcommand{\sL}{\mathsf L}
\newcommand{\sV}{\mathsf V}
\newcommand{\id}{\textnormal{Id}}
\newcommand{\weak}{\rightharpoonup}
\newcommand{\ran}{\textnormal{ran}\,}
\newcommand{\dom}{\textnormal{dom}\,}
\newcommand{\infconv}{\ensuremath{\mbox{\small$\,\square\,$}}}
\newcommand{\zer}{\textnormal{zer}}
\newcommand{\gra}{\textnormal{gra}\,}
\newcommand{\argm}[1]{\underset{#1}{\argmin\, }}
\newcommand{\scal}[2]{{\left\langle{{#1}\mid{#2}}\right\rangle}}
\newcommand{\pnorm}[1]{|\hspace{-.3mm}|\hspace{-.3mm}|{#1}|\hspace{-.3mm}|\hspace{-.3mm}|}
\newcommand{\menge}[2]{\big\{{#1}~\big |~{#2}\big\}}
\newcommand{\pinf}{\ensuremath{{+\infty}}}
\newcommand{\RR}{\ensuremath{\mathbb{R}}}
\newcommand{\RPP}{\ensuremath{\left]0,+\infty\right[}}
\newcommand{\RX}{\ensuremath{\left]-\infty,+\infty\right]}}
\newcommand{\sri}{\ensuremath{\text{\rm sri}\,}}
\newcommand{\prox}{\ensuremath{\text{\rm prox}\,}}
\newcommand{\weakly}{\ensuremath{\:\rightharpoonup\:}}
\numberwithin{equation}{section}
\DeclareSymbolFont{fouriersymbols}{FMS}{futm}{m}{n}
\DeclareSymbolFont{fourierlargesymbols}{FMX}{futm}{m}{n}
\DeclareMathDelimiter{\nr}{\mathord}{fouriersymbols}{152}{fourierlargesymbols}{147}
\DeclareMathOperator*{\argmin}{arg\,min}
\DeclareMathDelimiter{\nr}{\mathord}{fouriersymbols}{152}{fourierlargesymbols}{147}
\DeclareMathAlphabet{\mathpzc}{OT1}{pzc}{m}{it}
\title[Forward-partial inverse-half-forward splitting
algorithm]{Forward-partial inverse-half-forward splitting algorithm for
solving monotone inclusions}
\author{Luis M. Brice\~no-Arias$^{1}$, Jinjian Chen$^{2}$, Fernando Rold\'an$^{1}$, and Yuchao Tang$^{2}$}
\address{1. Departamento de Matem\'{a}tica, Universidad T\'{e}cnica Federico Santa Mar\'{i}a, Avenida Espa\~{n}a 1680, Valpara\'{i}so, Chile
 \\
2. Department of Mathematics, Nanchang University, Nanchang 330031, P.R. China}
\email{luis.briceno@usm.cl, fernando.roldan@usm.cl}
\subjclass[2010]{47H05, 47H10, 65K05, 65K15, 90C25, 49M29.}
\begin{document}

\begin{abstract}
In this paper we provide a splitting
algorithm for solving coupled monotone inclusions in a real Hilbert
space involving the sum of a normal cone to a vector subspace, a
maximally monotone, a monotone-Lipschitzian, and a cocoercive
operator. The proposed method takes advantage of the intrinsic
properties of each operator and generalizes the method of partial
inverses and the forward-backward-half forward splitting, among other
methods. At each iteration, our algorithm needs two computations of
the Lipschitzian operator while the cocoercive operator is activated
only once. By using product space techniques, we derive a
method for solving a composite monotone primal-dual inclusions
including linear operators and we apply it to solve constrained
composite convex optimization problems. Finally, we
apply our algorithm to a constrained total variation least-squares
problem and we compare its performance with
efficient methods in the literature.
\par
\bigskip

\noindent \textbf{Keywords.} {\it Splitting algorithms, monotone
operator theory, partial inverse, convex optimization.}
\end{abstract}

\maketitle
\section{Introduction}

In this paper we study the numerical resolution of the following
inclusion problem. The normal cone to $V$ is denoted by $N_V$.	
\begin{pro}\label{prob:main}
	Let $\H$ be a real Hilbert space and let $V$ be a closed vector
	subspace of $\H$.  Let $A\colon \H \to 2^{\H}$ be a maximally
	monotone operator, let $B\colon \H \to \H$ be a monotone and
	$L$-Lipschitzian operator for some $L\in\RPP$, and let $C \colon
	\H \to \H$ be a $\beta$-cocoercive operator for some
	$\beta\in\RPP$.
	The problem is to
	\begin{equation}\label{eq:primalinclusionnv}
		\text{find }\quad x \in \H\quad \text{ such that } \quad 0 \in Ax+
		Bx
		+Cx+N_V x,
	\end{equation}
	under the assumption that its solutions set $Z$ is nonempty.
\end{pro}
Problem~\ref{prob:main} models a wide class of problems in
engineering
including mechanical problems
\cite{Gabay1983,Glowinski1975,Goldstein1964}, differential
inclusions \cite{AubinHelene2009,Showalter1997}, game theory
\cite{AttouchCabot19,Nash13},
restoration and denoising in image processing
\cite{chambolle1997,Pustelnik2019,daubechies2004}, traffic theory
\cite{Nets1,Fukushima1996The,GafniBert84},
among
others.

In the case when $V=\H$ and the resolvent of $B$ is available,
Problem~\ref{prob:main}
can be solved by the algorithms in \cite{davis2015,Dong2021} and, if
$B$ is linear, by the algorithm in \cite{LatafatPatrinos2017}.
Moreover, if the resolvent of $B$ is difficult to compute,
Problem~\ref{prob:main} can be solved by the
\textit{forward-backward-half forward} algorithm (FBHF) proposed in
\cite{Arias2017A}. FBHF implement explicit activations of $B$ and $C$
and generalizes the classical forward-backward splitting
\cite{Lions1979SIAM} and Tseng's splitting \cite{Tseng2000SIAM}
when $B=0$ and $C=0$, respectively.

In the case when $V\neq \H$, a splitting algorithm for solving the case
$B=C=0$ is proposed in \cite{Spingran1983AMO} using the partial
inverse of
$A$ with respect to $V$ and extensions for the
cases $B=0$ and $C=0$ are proposed in \cite{briceno2015Optim}
and \cite{Briceno2015JOTA}, respectively.
On the other hand, the algorithms proposed in
\cite{Bot-2019-ACM,BotHendrich2013,Bot2016NA,Bot2013,Bot2015AMC,briceno2011SIAM,Cevher2020SVVA,CombPes12,Combettes2018MP,Combettes2014Optimization,Csetnek2019AMO,davis2015,DungVu15,Eckstein2017,Eckstein2020,EcksteinJohnstone,Malitsky2020SIAMJO,Raguet-SIAM-2013,Rieger2020AMC,RyuVu2020,Vu13}
can solve Problem~\ref{prob:main} under additional assumptions
or without exploiting the
vector subspace structure and the intrinsic properties of the operators
involved.
Indeed, the algorithms in
\cite{BotHendrich2013,Bot2013,Bot2015AMC,briceno2011SIAM,Combettes2018MP,Eckstein2017}
need to compute the resolvents of $B$ and $C$, which are not explicit
in general or they can be
numerically expensive. In addition, previous methods do not take
advantage of the vector subspace structure of
Problem~\ref{prob:main}. The schemes proposed in
\cite{Bot2016NA,CombPes12,DungVu15,Eckstein2020} take
advantage of the
properties of $B$, but the cocoercivity of $C$ and the vector
subspace structure are not leveraged. In fact, the algorithms in
\cite{Bot2016NA,CombPes12,DungVu15,Eckstein2020} may
consider
$B+C$ as a monotone and Lipschitzian operator and activate it twice
by iteration.
In contrast, the algorithms in
\cite{Cevher2020SVVA,Csetnek2019AMO,Malitsky2020SIAMJO,Rieger2020AMC,RyuVu2020}
activates $B+C$ only once by iteration, but they need to store in the
memory the two past iterations and the step-size is reduced
significantly.
In addition,
the methods proposed in
\cite{Bot-2019-ACM,Combettes2014Optimization,davis2015,EcksteinJohnstone,Raguet-SIAM-2013,Vu13}
take advantage of the cocoercivity of $C$, but they do not exploit
neither the
properties of $B$ nor the vector subspace structure of the problem.

Furthermore, note that Problem~\ref{prob:main} can be solved by the
algorithms proposed in \cite{Arias2017A,CombettesMinh2022}
by considering $N_V$ as any maximally monotone operator via
product space techniques. These
approaches do not exploit the
vector subspace structure of the problem and need to update
additional auxiliary dual variables at each iteration, which affects
their efficiency in large scale problems.
Moreover, since $B+C$ is monotone and
$(\beta^{-1}+L)$-Lipschitzian, Problem~\ref{prob:main} can be solved
by \cite{Briceno2015JOTA}. However, this implementation needs two
computations of $C$ by iteration which affects its efficiency when
$C$ is computationally expensive and also may increment drastically
the number of iterations to achieve the convergence criterion, as
perceived in \cite[Section~7.1]{Arias2017A} in the case $V=\H$.

In this paper we propose a splitting algorithm which
fully exploits the vector subspace structure, the cocoercivity of $C$,
and the Lipschitzian property of $B$. In the particular case when
$V=\H$, we recover \cite{Arias2017A}, which generalizes the
forward-backward splitting and Tseng's splitting
\cite{Tseng2000SIAM}. For general vector subspaces, our algorithm
also recovers the methods proposed in
\cite{briceno2015Optim,Briceno2015JOTA,Spingran1983AMO}.
By using standard product space techniques, we apply our algorithm
to solve composite primal-dual monotone inclusions including a
normal cone to a vector subspace, cocoercive, and
Lipschitzian-monotone operators and composite convex
optimization problems under vector subspace constraints.
We implement our method in the context of TV-regularized
least-squares problems with constraints and we compare its
performance with previous methods in the literature including
\cite{Condat13}. We observe that, in the case when the matrix in the
data fidelity term has large norm values, our implementation is more
efficient.

The paper is organized as follows. In Section~\ref{sec:notation} we
set our notation. In Section~\ref{sec:main} we provide our main
algorithm for solving Problem~\ref{prob:main} and its proof of
convergence. In Section~\ref{sec:application} we derive a method for
solving a composite monotone primal-dual inclusion, including
monotone, Lipschitzian, cocoercive, and bounded linear operators. In
this section we also derive an algorithm for solve constrained
composite convex optimization problems. Finally, in
Section~\ref{sec:numerical} we provide numerical experiments
illustrating the efficiency of our proposed method.

\section{Notations and Preliminaries} \label{sec:notation}
Throughout this paper $\H$ and $\G$ are real Hilbert spaces. We
denote their scalar
products by $\scal{\cdot}{\cdot}$, the associated norms by
$\|\cdot \|$, and by $\weakly$ the weak convergence.
Given a linear bounded
operator $L:\H \to \G$, we denote its adjoint by $L^*\colon\G\to\H$. $\id$ denotes the identity operator on $\H$.
Let $D\subset \H$ be non-empty and let $T: D \rightarrow \H$.
Let $\beta \in \left]0,+\infty\right[$. The operator $T$ is $\beta-$cocoercive if
\begin{equation} \label{def:coco}
	(\forall x \in D) (\forall y \in D)\quad \langle x-y \mid Tx-Ty \rangle
	\geq \beta \|Tx - Ty
	\|^2
\end{equation}
and it is $L-$Lipschitzian if
\begin{equation} \label{def:lips}
	(\forall x \in D) (\forall y \in D)\quad \|Tx-Ty\| \leq  L\|x - y \|.
\end{equation}	
Let $A:\H \rightarrow 2^{\H}$ be a set-valued operator. The domain,
range, and graph of $A$ are
$\dom\, A = \menge{x \in \H}{Ax \neq  \varnothing}$,
$\ran\, A = \menge{u \in \H}{(\exists x \in \H)\, u \in Ax}$,
and $\gra
A = \menge{(x,u) \in \H \times \H}{u \in Ax}$,
respectively.
The set of zeros of $A$ is  $\zer A =
\menge{x \in \H}{0 \in Ax}$, the inverse of $A$ is $A^{-1}\colon \H  \to 2^\H \colon u
\mapsto
\menge{x \in \H}{u \in Ax}$, and the resolvent of $A$ is
$J_A=(\id+A)^{-1}$.
The operator $A$  is monotone if
\begin{equation}\label{def:monotone}
	(\forall (x,u) \in \gra A) (\forall (y,v) \in \gra A)\quad \scal{x-y}{u-v}
	\geq 0
\end{equation}
and it is maximally monotone if it is monotone and there exists no
monotone operator $B :\H\to  2^{\H}$ such that $\gra B$ properly
contains $\gra A$, i.e., for every $(x,u) \in \H \times \H$,
\begin{equation} \label{def:maxmonotone}
	(x,u) \in \gra A \quad \Leftrightarrow\quad  (\forall (y,v) \in \gra A)\
	\
	\langle x-y \mid u-v \rangle \geq 0.
\end{equation}
We denote by $\Gamma_0(\H)$ the class of proper lower
semicontinuous convex functions $f\colon\H\to\RX$. Let
$f\in\Gamma_0(\H)$.
The Fenchel conjugate of $f$ is
defined by $f^*\colon u\mapsto \sup_{x\in\H}(\scal{x}{u}-f(x))$, which
is a function in $\Gamma_0(\H)$,
the subdifferential of $f$ is the maximally monotone operator
$$\partial f\colon x\mapsto \menge{u\in\H}{(\forall y\in\H)\:\:
	f(x)+\scal{y-x}{u}\le f(y)},$$
we have that $(\partial f)^{-1}=\partial f^*$,
and that $\zer\,\partial f$ is the set of
minimizers of $f$, which is denoted by $\arg\min_{x\in \H}f$.
We denote by
\begin{equation}
	\label{e:prox}
	\prox_{f}\colon
	x\mapsto\argm{y\in\H}\Big(f(y)+\frac{1}{2}\|x-y\|^2\Big).
\end{equation}
We have
$\prox_f=J_{\partial f}$. Moreover, it follows from \cite[Theorem 14.3]{bauschkebook2017} that
\begin{equation}
	\label{e:Moreau_nonsme}
	\prox_{\gamma f}+\gamma \prox_{f^*/\gamma} \circ
	\id/\gamma=\id.
\end{equation}
Given a non-empty closed convex set $C\subset\H$, we denote by
$P_C$ the projection onto $C$, by
$\iota_C\in\Gamma_0(\H)$ the indicator function of $C$, which
takes the value $0$ in $C$ and $\pinf$ otherwise, and by $N_C=
\partial (\iota_C)$ the normal cone to $C$. The partial inverse of $A$
with respect to a closed vector subspace $V$ of $\H$, denoted by
$A_V$, is
defined by
\begin{equation}\label{eq:defparinv}
	(\forall (x,y) \in \H^2) \quad y \in A_V x \quad
	\Leftrightarrow\quad
	(P_V y + P_{V^{\bot}}x) \in A(P_V x +P_{V^{\bot}}y).
\end{equation}
Note that $A_\H = A$ and $A_{\{0\}} = A^{-1}$.
For further properties of monotone operators,
non-expansive mappings, and convex analysis, the
reader is referred to \cite{bauschkebook2017}.

The following is a simplified version of the algorithm proposed in
\cite[Theorem~2.3]{Arias2017A}.
\begin{prop}{\cite[Theorem~2.3]{Arias2017A}}\label{prop:pre}
	Let $\hat{L}\in\RPP$, let $\hat{\beta}\in\RPP$, let
	$\mathcal{A}\colon
	\H \to 2^\H$
	be a maximally monotone
	operator, let $\mathcal{B}\colon \H \to \H$ be monotone and
	$\hat{L}$-Lipschitzian, and let $\mathcal{C}\colon \H \to \H$ be a
	$\hat{\beta}$-cocoercive operator. Suppose that
	$\zer(\mathcal{A}+\mathcal{B}+\mathcal{C})\ne\varnothing$ and
	set
	\begin{equation}\label{e:chipre}
		\hat{\chi} = \frac{4\hat{\beta}}{1+\sqrt{1+16\hat{\beta}^2
				\hat{L}^2}}
		\in \left] 0,
		\min\left\{2\hat{\beta}, \frac{1}{\hat{L}}\right\}\right[,
	\end{equation}
	let $(\lambda_{n})_{n \in \N}$ be a sequence in
	$[\varepsilon,\hat{\chi}-\varepsilon]$, for some $\varepsilon \in
	\left]0,\hat{\chi}/2\right[$. Moreover, let $z_0 \in \H$ and consider
	the
	following recurrence
	\begin{equation}
		\label{e:algopre}
		\left\lfloor
		\begin{aligned}
			&\text{for }n=1,2,\ldots\\
			&s_n = J_{\lambda_n \mathcal{A}} \big(z_n-\lambda_n
			(\mathcal{B} +\mathcal{C}) z_n\big)\\
			&z_{n+1} = s_n + \lambda_n (\mathcal{B} z_n -
			\mathcal{B} s_n).
		\end{aligned}
		\right.
	\end{equation}
	Then, $(z_n)_{n\in\N}$ converges weakly to some $\overline{z}
	\in
	\zer
	(\mathcal{A}+\mathcal{B}+\mathcal{C})$.
\end{prop}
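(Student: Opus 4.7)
The plan is to establish quasi-Fejér monotonicity of $(z_n)_{n\in\N}$ with respect to $\zer(\mathcal{A}+\mathcal{B}+\mathcal{C})$ and then conclude weak convergence via Opial's lemma. First, I would exploit the defining relation of $s_n$ to obtain
\begin{equation*}
\lambda_n^{-1}(z_n - s_n) - \mathcal{B}z_n - \mathcal{C}z_n \in \mathcal{A}s_n,
\end{equation*}
and, for any fixed $\bar z \in \zer(\mathcal{A}+\mathcal{B}+\mathcal{C})$, use $-\mathcal{B}\bar z - \mathcal{C}\bar z \in \mathcal{A}\bar z$. Monotonicity of $\mathcal{A}$ then yields the pivotal estimate
\begin{equation*}
2\scal{s_n - \bar z}{z_n - s_n} \geq 2\lambda_n\scal{s_n - \bar z}{\mathcal{B}z_n - \mathcal{B}\bar z} + 2\lambda_n\scal{s_n - \bar z}{\mathcal{C}z_n - \mathcal{C}\bar z}.
\end{equation*}

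Next, I would expand $\|z_{n+1} - \bar z\|^2$ using the update rule together with the identity $\|s_n - \bar z\|^2 = \|z_n - \bar z\|^2 - 2\scal{z_n - \bar z}{z_n - s_n} + \|z_n - s_n\|^2$, and substitute the pivotal estimate. The $\mathcal{B}$-cross term arising from the update combines with the one above so that only $-2\lambda_n\scal{s_n - \bar z}{\mathcal{B}s_n - \mathcal{B}\bar z}$ survives, which is nonpositive by monotonicity of $\mathcal{B}$, while the remaining squared norm is bounded via Lipschitz continuity by $\lambda_n^2\hat L^2\|z_n - s_n\|^2$. For the cocoercive part I would split
\begin{equation*}
-2\lambda_n\scal{s_n - \bar z}{\mathcal{C}z_n - \mathcal{C}\bar z} = -2\lambda_n\scal{z_n - \bar z}{\mathcal{C}z_n - \mathcal{C}\bar z} + 2\lambda_n\scal{z_n - s_n}{\mathcal{C}z_n - \mathcal{C}\bar z},
\end{equation*}
apply cocoercivity to the first summand, and Young's inequality with parameter $2\hat\beta$ to the second; the negative coercive contributions then cancel exactly, leaving a residual $\lambda_n\|z_n - s_n\|^2/(2\hat\beta)$.

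Combining these bounds would give
\begin{equation*}
\|z_{n+1} - \bar z\|^2 \leq \|z_n - \bar z\|^2 - \bigg(1 - \lambda_n^2\hat L^2 - \frac{\lambda_n}{2\hat\beta}\bigg)\|z_n - s_n\|^2.
\end{equation*}
A direct algebraic check confirms that $\hat\chi$ is precisely the unique positive root of $1 - \lambda^2\hat L^2 - \lambda/(2\hat\beta) = 0$, so the constraint $\lambda_n \in [\varepsilon,\hat\chi-\varepsilon]$ produces a uniformly positive coefficient. This yields Fejér monotonicity, boundedness of $(z_n)_{n\in\N}$, and $\sum_n \|z_n - s_n\|^2 < \infty$, hence $\|z_n - s_n\| \to 0$.

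To identify weak cluster points, suppose $z_{n_k}\weakly \tilde z$; then $s_{n_k}\weakly \tilde z$ as well, and the element
\begin{equation*}
v_k := \lambda_{n_k}^{-1}(z_{n_k} - s_{n_k}) + \mathcal{B}s_{n_k} - \mathcal{B}z_{n_k} + \mathcal{C}s_{n_k} - \mathcal{C}z_{n_k} \in (\mathcal{A}+\mathcal{B}+\mathcal{C})s_{n_k}
\end{equation*}
satisfies $v_k \to 0$ strongly, using the uniform lower bound on $\lambda_n$, the Lipschitz continuity of $\mathcal{B}$, and the $\hat\beta^{-1}$-Lipschitz continuity implied by cocoercivity of $\mathcal{C}$. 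Since $\mathcal{A}+\mathcal{B}+\mathcal{C}$ is maximally monotone (the sum theorem applies because $\mathcal{B}+\mathcal{C}$ is monotone, everywhere defined, and continuous), its graph is sequentially closed under weak-strong convergence, so $0\in(\mathcal{A}+\mathcal{B}+\mathcal{C})\tilde z$. Opial's lemma then delivers weak convergence to a single element of $\zer(\mathcal{A}+\mathcal{B}+\mathcal{C})$. The main technical obstacle is the precise calibration of Young's inequality in the cocoercive step: with parameter $2\hat\beta$ the coercive contribution vanishes exactly and the resulting coefficient on $\|z_n - s_n\|^2$ matches the root equation defining $\hat\chi$, whereas any other choice either wastes the cocoercive information or shrinks the admissible step-size range.
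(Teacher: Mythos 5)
Your argument is correct: the resolvent inclusion plus monotonicity of $\mathcal{A}$, the cancellation of the $\mathcal{B}$-cross terms, the Lipschitz bound, and the Young/cocoercivity calibration with parameter $2\hat\beta$ do yield $\|z_{n+1}-\overline z\|^2\le\|z_n-\overline z\|^2-\big(1-\lambda_n^2\hat L^2-\tfrac{\lambda_n}{2\hat\beta}\big)\|z_n-s_n\|^2$, and $\hat\chi$ is indeed the positive root of that quadratic, so Fej\'er monotonicity, $\|z_n-s_n\|\to 0$, the weak--strong closedness of $\gra(\mathcal{A}+\mathcal{B}+\mathcal{C})$ (maximal by the sum theorem, since $\mathcal{B}+\mathcal{C}$ is monotone, continuous, and everywhere defined), and the Opial-type Fej\'er theorem complete the proof. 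Note that the paper itself does not prove this proposition but imports it from the cited reference on the forward-backward-half-forward algorithm; your Fej\'er-plus-demiclosedness argument is essentially the (error-free, fixed-metric) version of the proof given there, so you have in effect reproduced the cited proof rather than found a different route.
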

Observe that \eqref{e:algopre} reduces to forward-backward splitting
when $\mathcal{B}=0$ (and $L=0$), and to a version of Tseng's
splitting when
$\mathcal{C}=0$ (and $\beta\to\pinf$)
\cite{briceno2011SIAM,Tseng2000SIAM}.
\section{Main result}\label{sec:main}
The following is our main algorithm, whose convergence is
proved in Theorem~\ref{teo:main1} below.

\begin{algo}\label{algo:1} In the context of Problem~\ref{prob:main},
	let $(x_{0},y_{0})\in V\times V^{\bot}$, let $\gamma\in\RPP$, and
	let
	$(\lambda_{n})_{n\in\N}$ be a sequence in $\RPP$. Consider
	the recurrence
	\begin{equation}
		\label{e:algomain}
		\left\lfloor
		\begin{aligned}
			&\text{for }n=1,2,\ldots\\
			&\text{ find }\, (p_{n},q_{n})\in \mathcal{H}^2\, \text{ such that }\,
			x_{n}+\gamma y_{n}-\lambda_{n}\gamma
			P_{V}(B+C)x_{n}=p_{n}+\gamma
			q_{n}\\
			& \quad \quad \text{ and } \,
			\frac{P_{V}q_{n}}{\lambda_{n}}+P_{V^{\bot}}q_{n}\in
			A\left(P_{V}p_{n}+\frac{P_{V^{\bot}}p_{n}}{\lambda_{n}}\right),\\
			& x_{n+1}=P_{V}p_{n}+\lambda_{n}\gamma
			P_{V}(Bx_{n}-BP_{V}p_{n}),\\
			&y_{n+1}=P_{V^{\bot}}q_{n}.
		\end{aligned}
		\right.
	\end{equation}
\end{algo}

Note that \eqref{algo:1} involves only one activation of $C$, two of
$B$,
and three projections onto $V$ at each iteration.

\begin{teo}\label{teo:main1}
	In the context of Problem~\ref{prob:main}, set
	\begin{equation}
		\label{e:defchi}
		\chi=\frac{4\beta}{1+\sqrt{1+16\beta^2
				L^2}}\in\left]0,\min\left\{2\beta,\frac{1}{L}\right\}\right[,
	\end{equation}
	let $\gamma\in\RPP$,
	and let $(\lambda_{n})_{n\in\N}$ be a sequence in
	$[\varepsilon,\chi/\gamma-\varepsilon]$ for some
	$\varepsilon\in\left]0,\chi/(2\gamma)\right[$.
	Moreover, let $(x_{0},y_{0})\in V\times V^{\bot}$ and let $(x_n)_{n
		\in
		\N}$ and
	$(y_n)_{n \in \N}$ be the sequences generated by
	Algorithm~\ref{algo:1}. Then $(x_{n})_{n\in\N}$ and
	$(y_n)_{n \in \N}$ are sequences in $V$ and
	$V^{\bot}$, respectively, and there exist
	$\overline{x}\in Z$ and $\overline{y}\in
	V^{\bot}\cap(A\overline{x}+P_{V}(B+C)\overline{x})$
	such that $x_{n}\weak \overline{x}$ and $y_{n}\weak
	\overline{y}$.
\end{teo}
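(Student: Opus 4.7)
The plan is to recognize Algorithm~\ref{algo:1} as an instance of Proposition~\ref{prop:pre} after a partial-inverse reformulation in the decomposition $\H=V\oplus V^{\bot}$. I would start with the bookkeeping observation that $(x_n)_{n\in\N}\subset V$ and $(y_n)_{n\in\N}\subset V^{\bot}$: this is true at $n=0$ by hypothesis and is preserved by the recurrence, since $x_{n+1}$ is a sum of vectors each carrying an outer $P_V$ and $y_{n+1}=P_{V^{\bot}}q_n$.

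The core step is the identification. I would set $z_n=x_n+\gamma y_n$, $s_n=P_V p_n+\gamma P_{V^{\bot}}q_n$, and
\begin{equation*}
	\mathcal{A}=(\gamma A)_V,\qquad \mathcal{B}\colon z\mapsto\gamma P_V B P_V z,\qquad \mathcal{C}\colon z\mapsto\gamma P_V C P_V z.
\end{equation*}
Expanding \eqref{eq:defparinv} for $\gamma A$ shows that the inclusion in Algorithm~\ref{algo:1} is equivalent to $\tfrac{1}{\lambda_n}(P_{V^{\bot}}p_n+\gamma P_V q_n)\in\mathcal{A} s_n$. Adding this element to $s_n$ reproduces $p_n+\gamma q_n$, and the first equation of \eqref{e:algomain} reads $z_n-\lambda_n(\mathcal{B}+\mathcal{C})z_n=p_n+\gamma q_n$, so $s_n=J_{\lambda_n\mathcal{A}}(z_n-\lambda_n(\mathcal{B}+\mathcal{C})z_n)$. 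Summing the updates for $x_{n+1}$ and $\gamma y_{n+1}$ then yields $z_{n+1}=s_n+\lambda_n(\mathcal{B} z_n-\mathcal{B} s_n)$, so \eqref{e:algomain} coincides with \eqref{e:algopre} for the triple $(\mathcal{A},\mathcal{B},\mathcal{C})$.

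Once this identification is established, the hypotheses of Proposition~\ref{prop:pre} are routine to verify. Maximal monotonicity of $\mathcal{A}$ follows from maximal monotonicity of $\gamma A$ and the fact that partial inverses preserve this property; monotonicity and $(\gamma L)$-Lipschitzianity of $\mathcal{B}$ follow from $L$-Lipschitzianity of $B$ and nonexpansiveness of $P_V$; and $(\beta/\gamma)$-cocoercivity of $\mathcal{C}$ follows from $\beta$-cocoercivity of $C$ and the same projection bound. With $\hat L=\gamma L$ and $\hat\beta=\beta/\gamma$, formula \eqref{e:chipre} yields $\hat\chi=\chi/\gamma$, which matches the admissible window for $(\lambda_n)_{n\in\N}$ in the statement. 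Unfolding \eqref{eq:defparinv} for $\gamma A$ also shows that $\bar z\in\zer(\mathcal{A}+\mathcal{B}+\mathcal{C})$ if and only if $\bar x:=P_V\bar z\in Z$ and $\bar y:=\gamma^{-1}P_{V^{\bot}}\bar z\in V^{\bot}\cap(A\bar x+P_V(B+C)\bar x)$, so the nonemptiness of $Z$ transfers to $\zer(\mathcal{A}+\mathcal{B}+\mathcal{C})$. Proposition~\ref{prop:pre} then yields $z_n\weak\bar z$, and weak continuity of $P_V$ and $P_{V^{\bot}}$ delivers $x_n=P_V z_n\weak\bar x$ and $y_n=\gamma^{-1}P_{V^{\bot}}z_n\weak\bar y$.

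The main obstacle is the algebraic identification in the second step: confirming that the asymmetric change of variables (identity on $V$, factor $\gamma$ on $V^{\bot}$) is precisely the one that turns the mixed inclusion in Algorithm~\ref{algo:1} into a clean resolvent of $(\gamma A)_V$, and simultaneously converts the two explicit evaluations of $B$ (at $x_n$ and at $P_V p_n$) into the Tseng-type forward correction of \eqref{e:algopre}. Once this bookkeeping is carried out, the remainder of the argument is a direct invocation of Proposition~\ref{prop:pre}.
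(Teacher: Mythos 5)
Your proposal is correct and follows essentially the same route as the paper: the change of variables $z_n=x_n+\gamma y_n$, $s_n=P_Vp_n+\gamma P_{V^\bot}q_n$ with the operators $(\gamma A)_V$, $\gamma P_VBP_V$, $\gamma P_VCP_V$, reduction of \eqref{e:algomain} to \eqref{e:algopre}, and application of Proposition~\ref{prop:pre} with $\hat L=\gamma L$, $\hat\beta=\beta/\gamma$, $\hat\chi=\chi/\gamma$, followed by the solution correspondence to recover $\overline{x}\in Z$ and $\overline{y}$. The only difference is cosmetic: the paper quotes the partial-inverse properties, resolvent identity, and solution characterization from \cite{Briceno2015JOTA,briceno2015Optim}, whereas you verify them directly from \eqref{eq:defparinv}, which is the same content.
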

\begin{proof}
	Define 	
	\begin{equation}\label{eq:defABC}
		\begin{cases}
			\mathcal{A}_\gamma = (\gamma A)_V \colon \H \to 2^\H\\
			\mathcal{B}_\gamma = \gamma P_V \circ B \circ P_V \colon \H
			\to \H\\
			\mathcal{C}_\gamma = \gamma P_V \circ C \circ P_V \colon \H
			\to \H.
		\end{cases}
	\end{equation}
	It follows from \cite[Proposition~3.1(i)\&(ii)]{Briceno2015JOTA}
	that
	$\mathcal{A}_\gamma$ is maximally monotone and that
	$\mathcal{B}_\gamma$ is monotone and $\gamma
	L$-Lipschitzian.
	Moreover, $\mathcal{C}_\gamma$
	is $\beta/\gamma$-cocoercive in view of
	\cite[Proposition~5.1(ii)]{briceno2015Optim}.
	Since $C$ is $\beta^{-1}$-Lipschitzian, $B+C$ is $
	(\beta^{-1}+L)$-Lipschitzian, and \eqref{eq:defABC} and the
	linearity
	of $P_V$ yield
	\begin{equation}
		\label{e:ident}
		\mathcal{B}_{\gamma}+\mathcal{C}_{\gamma}=\gamma P_V\circ
		(B+C)\circ P_V.
	\end{equation}
	Therefore,
	\cite[Proposition~3.1(iii)]{Briceno2015JOTA} implies that
	$\hat{x} \in \H$ is a
	solution to Problem~\ref{prob:main} if and only if
	\begin{multline} \label{eq:solABC}
		\hat{x} \in V\quad \text{and}\quad
		\big(\exists \hat{y} \in V^\bot\cap
		(A\hat{x}+B\hat{x}+C\hat{x})\big)\\
		\quad \hat{x} + 	\gamma\big(\hat{y}-P_{V^\bot} (B+C)
		\hat{x}\big) \in
		\zer
		(\mathcal{A}_\gamma+\mathcal{B}_{\gamma}+\mathcal{C}_\gamma).
	\end{multline}
	Now, since $x_0 \in V$ and $y_0 \in V^{\bot}$, it follows from
	Algorithm~\ref{e:algomain}  that $(x_n)_{n \in \N}$ and
	$(y_n)_{n \in \N}$ are sequences in $V$ and $V^{\bot}$,
	respectively. In addition, from
	Algorithm~\ref{e:algomain} and
	\cite[Proposition~3.1(i)]{Briceno2015JOTA} we deduce that
	\begin{equation}\label{eq:rsvAg}
		(\forall n\in\N)\quad 	J_{\lambda_n \mathcal{A}_\gamma} (x_n +
		\gamma y_n -\lambda_n\gamma P_V( B + C)x_n) = P_V p_n +
		\gamma P_{V^\bot} q_n.	
	\end{equation}
	For every $n \in \N$, set $z_n = x_n +\gamma y_n$ and set
	$s_n=P_V p_n + \gamma P_{V^\bot} q_n$. Hence, for every
	$n\in\N$,
	$P_V s_n = P_V p_n$, $P_{V^{\bot}} s_n= \gamma
	P_{V^{\bot}} q_n$, and
	\eqref{eq:rsvAg} and \eqref{e:ident} yield
	\begin{align}\label{eq:defsn}
		s_n&=J_{\lambda_n \mathcal{A}_\gamma} (x_n + \gamma y_n
		-\lambda_n\gamma P_V( B +  C)x_n)\nonumber\\
		&=J_{\lambda_n \mathcal{A}_\gamma} (z_n -\lambda_n\gamma
		P_V( B +  C)P_Vz_n)\nonumber\\
		&=J_{\lambda_n \mathcal{A}_\gamma} \big(z_n-\lambda_n
		(\mathcal{B}_\gamma +\mathcal{C}_\gamma) z_n\big).
	\end{align}
	Thus, from Algorithm~\ref{e:algomain} we deduce that, for every
	$n\in\N$,
	\begin{align}\label{eq:zn+1}
		z_{n+1}&=x_{n+1}+\gamma y_{n+1}\nonumber\\
		&= P_V p_n + \lambda_n \gamma P_V(Bx_n - B P_V p_n) +
		\gamma
		P_{V^{\bot}} q_n\nonumber\\
		&= P_{V} s_n +  \lambda_n (\gamma P_VBP_Vz_n -
		\gamma P_V
		B  P_V s_n)+ P_{V^{\bot}} s_n\nonumber\\
		&= s_n + \lambda_n (\mathcal{B}_{\gamma} z_n -
		\mathcal{B}_{\gamma}  s_n).
	\end{align}
	Therefore, we obtain from  \eqref{eq:defsn} and \eqref{eq:zn+1}
	that
	\begin{equation}
		\left\lfloor
		\begin{aligned}
			&\text{for }n=1,2,\ldots\\
			&s_n = J_{\lambda_n \mathcal{A}_\gamma}
			\big(z_n-\lambda_n
			(\mathcal{B}_\gamma +\mathcal{C}_\gamma) z_n\big)\\
			&z_{n+1} = s_n + \lambda_n (\mathcal{B}_\gamma z_n
			-\mathcal{B}_\gamma s_n).
		\end{aligned}
		\right.
	\end{equation}
	Altogether, by setting $\hat{\beta}=\beta/\gamma$ and
	$\hat{L}=\gamma L$, we have $\hat{\chi}=\chi/\gamma$ and
	Proposition~\ref{prop:pre} asserts that there exists $\overline{z}
	\in
	\zer (\mathcal{A}_\gamma + \mathcal{B}_\gamma
	+\mathcal{C}_\gamma)$ such that
	$z_n \weak
	\overline{z}$. Furthermore, by setting $\overline{x} = P_V
	\overline{z}$ and $\overline{y} = P_{V^\bot}
	\overline{z}/\gamma$,
	we have $-(\mathcal{B}_\gamma
	+\mathcal{C}_\gamma)(\overline{x}+\gamma\overline{y})\in
	\mathcal{A}_\gamma(\overline{x}+\gamma\overline{y})$,
	which, in view of \eqref{eq:defABC}, is equivalent to
	$-P_V(B+C)\overline{x}+\overline{y}\in A\overline{x}$. Therefore, by
	defining $\hat{y}=\overline{y}+P_{V^{\bot}}(B+C)\overline{x}\in
	V^{\bot}\cap (A\overline{x}+B\overline{x}+C\overline{x})$, we have
	$\overline{x}+\gamma(\hat{y}-P_{V^{\bot}}(B+C)\overline{x})\in
	\zer (\mathcal{A}_\gamma + \mathcal{B}_\gamma
	+\mathcal{C}_\gamma)$ and \eqref{eq:solABC} implies that
	$\overline{x} \in Z$ and that $\overline{y}\in
	V^{\bot}\cap(A\overline{x}+P_{V}(B+C)\overline{x})$.
	Moreover, from the weakly continuity of $P_V$ and
	$P_{V^{\bot}}$,
	we obtain  $x_n =
	P_V z_n \weak P_{V} \overline{z} = \overline{x}$ and
	$y_{n}=P_{V^{\bot}}z_{n}/\gamma\rightharpoonup
	P_{V^{\bot}}\overline{z}/\gamma=\overline{y}$, which
	completes the proof.
\end{proof}

The sequence $(\lambda_n)_{n\in\N}$ in Algorithm~\ref{algo:1} can
be manipulated in order to accelerate the convergence.
However, as in
\cite{briceno2015Optim,Briceno2015JOTA,Spingarn1985MP}, the
inclusion in \eqref{e:algomain} is not always easy to solve.
The following result provides a particular case of our
method, in which this inclusion can be explicitly computed in terms of
the resolvent of $A$.

\begin{cor} \label{teo:main2}
	In the context of Problem~\ref{prob:main}, let $(x_0,y_0) \in
	V\times
	V^{\bot}$, let $\chi\in\RPP$ be the constant defined in
	\eqref{e:defchi}, let $\gamma\in \left]0,\chi\right[$, and let
	$(x_n)_{n
		\in \N}$ and $(y_n)_{n \in \N}$ be the sequences generated by
	the
	recurrence
	\begin{equation}
		\label{e:algomain2}
		\left\lfloor
		\begin{aligned}
			&\text{for }n=1,2,\ldots\\
			&p_{n}=J_{{\gamma}A}\big(x_{n}+\gamma
			y_n-{\gamma}P_{V}(B+C)x_{n}\big)\\
			&r_n=P_Vp_n\\
			&x_{n+1}=r_n+\gamma P_V(Bx_n-Br_{n})\\
			&y_{n+1}=y_{n}-\frac{p_n-r_n}{\gamma}.
		\end{aligned}
		\right.
	\end{equation}
	Then,  there exist
	$\overline{x}\in Z$ and $\overline{y}\in
	V^{\bot}\cap(A\overline{x}+P_{V}(B+C)\overline{x})$
	such that $x_{n}\weak \overline{x}$ and $y_{n}\weak
	\overline{y}$.
\end{cor}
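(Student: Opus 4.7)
The plan is to exhibit \eqref{e:algomain2} as the particular instance of Algorithm~\ref{algo:1} obtained by freezing the relaxation parameter at $\lambda_n\equiv 1$, and then invoke Theorem~\ref{teo:main1} directly. First I would check admissibility of this choice: since $\gamma\in\,]0,\chi[$ we have $\chi/\gamma>1$, so picking $\varepsilon\in\,]0,\min\{1,\chi/(2\gamma)\}[$ small enough (concretely, any $\varepsilon<\min\{1,\chi/\gamma-1\}$) guarantees $1\in[\varepsilon,\chi/\gamma-\varepsilon]$, hence the constant sequence $\lambda_n\equiv1$ is eligible in Theorem~\ref{teo:main1}.

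Next I would reduce the implicit inclusion in \eqref{e:algomain} with $\lambda_n=1$ to an explicit resolvent computation. Setting $\lambda_n=1$, the two defining conditions collapse to $p_n+\gamma q_n=x_n+\gamma y_n-\gamma P_V(B+C)x_n$ together with $q_n\in Ap_n$. Equivalently, $x_n+\gamma y_n-\gamma P_V(B+C)x_n\in (\id+\gamma A)p_n$, i.e.
\begin{equation*}
p_n=J_{\gamma A}\bigl(x_n+\gamma y_n-\gamma P_V(B+C)x_n\bigr),
\end{equation*}
which is exactly the first line of \eqref{e:algomain2}. The update $x_{n+1}=P_Vp_n+\gamma P_V(Bx_n-BP_Vp_n)$ from \eqref{e:algomain} with $\lambda_n=1$ coincides with the third line of \eqref{e:algomain2} once we set $r_n:=P_Vp_n$.

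The only step that requires a small verification is the identification of the dual update $y_{n+1}=P_{V^\bot}q_n$ with the fourth line of \eqref{e:algomain2}. Here I would use the fact, established in Theorem~\ref{teo:main1}, that $x_n\in V$ and $y_n\in V^\bot$ for every $n$, together with $P_V(B+C)x_n\in V$. These give $P_{V^\bot}(x_n+\gamma y_n-\gamma P_V(B+C)x_n)=\gamma y_n$, hence
\begin{equation*}
y_{n+1}=P_{V^\bot}q_n=\frac{1}{\gamma}P_{V^\bot}\bigl(x_n+\gamma y_n-\gamma P_V(B+C)x_n-p_n\bigr)=y_n-\frac{p_n-r_n}{\gamma},
\end{equation*}
matching \eqref{e:algomain2} exactly.

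Having shown that $(x_n)_{n\in\N}$ and $(y_n)_{n\in\N}$ produced by \eqref{e:algomain2} are precisely those generated by Algorithm~\ref{algo:1} with $\lambda_n\equiv1$, the weak convergence $x_n\weakly\overline x\in Z$ and $y_n\weakly\overline y\in V^\bot\cap(A\overline x+P_V(B+C)\overline x)$ follows immediately from Theorem~\ref{teo:main1}. There is no genuine obstacle; the only point to handle carefully is the bookkeeping in the dual update, which relies on $x_n\in V$ and $y_n\in V^\bot$ being preserved by the recursion.
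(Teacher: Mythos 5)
Your proposal is correct and follows essentially the same route as the paper: identify \eqref{e:algomain2} with Algorithm~\ref{algo:1} for $\lambda_n\equiv 1$ (admissible since $\gamma<\chi$), with the auxiliary $q_n$ recovered from the resolvent relation, and then invoke Theorem~\ref{teo:main1}. The only cosmetic difference is that the invariance $x_n\in V$, $y_n\in V^{\bot}$ is better obtained directly by induction from \eqref{e:algomain2} itself (as the paper does) rather than quoted from Theorem~\ref{teo:main1}, which avoids any appearance of circularity in the identification step.
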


\begin{proof}
	Note that \eqref{e:algomain2} implies that $(x_{n})_{n\in\N}$ and
	$(y_n)_{n \in \N}$ are sequences in $V$ and
	$V^{\bot}$, respectively.
	Fix  $n\in\N$ and set $q_n = (x_{n}+\gamma
	y_n-{\gamma}P_{V}(B+C)x_{n} - p_n )/\gamma$.
	Hence, we obtain from \eqref{e:algomain2} that
	$p_n+\gamma	
	q_n=x_{n}+\gamma y_n-{\gamma}P_{V}(B+C)x_{n}$, that
	$q_n \in A p_n$, that $x_{n+1}=P_{V}p_{n}+\gamma
	P_{V}(Bx_{n}-BP_{V}p_{n})$, and that
	$y_{n+1}=y_n-(p_n-P_Vp_n)/\gamma=y_n-P_{V^{\bot}}p_n/\gamma=
	P_{V^{\bot}}q_n$.
	Therefore, \eqref{e:algomain2} is a particular case of
	Algorithm~\ref{algo:1} when $\lambda_{n}\equiv 1 \in\, ] 0 ,
	\chi/\gamma [$ and the result
	hence follows from Theorem~\ref{teo:main1}.
\end{proof}	

\begin{rem}
	\begin{enumerate}
		\item
		Note that, in the case when $C=0$, \eqref{e:algomain2} reduces
		to
		the method proposed in \cite{Briceno2015JOTA}. Observe that
		in this case we can take $\beta\to\pinf$ which yields
		$\chi\to 1/L$.
		\item
		Note that, in the case when $B=0$, \eqref{e:algomain2} reduces
		to
		the method proposed in \cite{briceno2015Optim}. In this case, we
		can
		take $L\to 0$, which yields $\chi\to 2\beta$.
		\item In the case when $V=\H$, \eqref{e:algomain2} reduces to
		the
		algorithm proposed in \cite{Arias2017A} (see also
		Proposition~\ref{prop:pre}).
	\end{enumerate}
\end{rem}

\section{Applications}\label{sec:application}
In this section we tackle the following composite
primal-dual monotone inclusion.
\begin{pro}
	\label{prob:PD}
	Let $\sH$ be a real Hilbert space, let $\sV$ be a closed vector
	subspace of $\sH$,
	let $\sA\colon\sH\to 2^{\sH}$ be maximally monotone, let
	$\sM\colon\sH\to\sH$
	be monotone and $\mu$-Lipschitzian, for some $\mu\in\RPP$,
	let $\sC\colon \sH
	\to\sH$ be $\zeta$-cocoercive, for some $\zeta \in\RPP$, and let
	$m$ be a strictly positive integer. For every $i
	\in\{1, \ldots, m\}$, let $\sG_{i}$ be a real Hilbert space, let
	$\sB_{i}\colon
	\sG_{i} \to2^{\sG_{i}}$ be maximally monotone, let $\sN_{i}\colon
	\sG_{i} \to 2^{\sG_{i}}$ be monotone and
	such that $\sN_{i}^{-1}$ is $\nu_{i}$-Lipschitzian, for some
	$\nu_{i}
	\in\RPP$, let $\sD_i$ be maximally monotone and
	$\delta_i$-strongly
	monotone, for some $\delta_i\in\RPP$,
	and let $\sL_{i}\colon \sH\to \sG_{i}$ be a nonzero
	bounded
	linear operator.
	The problem is to
	\begin{multline}
		\label{e:KTinc}
		\text{find}\quad \overline{\sx} \in \sH,\overline{\su}_{1} \in
		\sG_{1}, \ldots,
		\overline{\su}_{m} \in
		\sG_{m}\:\:\quad\text{such that}\\\:\:
		\begin{cases}
			0 &\in \sA \overline{\sx}+\sM\overline{\sx}+\sC \overline{\sx}+
			\sum_{i=1}^{m}
			\sL_{i}^{*}\overline{\su}_i+N_{\sV}\overline{\sx}\\
			0&\in\big(\sB_{1}^{-1} +\sN_1^{-1}
			+\sD_{1}^{-1}\big)\overline{\su}_1-\sL_{1}
			\overline{\sx}\\
			&\:\vdots\\
			0&\in\big(\sB_{m}^{-1} +\sN_m^{-1}
			+\sD_{m}^{-1}\big)\overline{\su}_m-\sL_{m}
			\overline{\sx},
		\end{cases}
	\end{multline}
	under the assumption that the solution set $\boldsymbol{Z}$
	to \eqref{e:KTinc} is nonempty.
\end{pro}
Note that, if
$(\overline{\sx},\overline{\su}_1,\ldots,\overline{\su}_m)\in
\boldsymbol{Z}$ then $\overline{\sx}$ solves the primal inclusion
\begin{equation}
	\label{e:priminc}
	\text{find}\quad \overline{\sx} \in \sH\quad\text{such that}\quad
	0 \in \sA \overline{\sx}+\sM\overline{\sx}+\sC \overline{\sx}+
	\sum_{i=1}^{m} \sL_{i}^{*}\left(\left(\sB_{i} \infconv\sN_i
	\infconv\sD_{i}\right)\sL_{i}
	\overline{\sx}\right)+N_{\sV}\overline{\sx}
\end{equation}
and $(\overline{\su}_1,\ldots,\overline{\su}_m)$ solves the dual
inclusion
\begin{multline}
	\label{e:dualinc}
	\text{find}\:\: \overline{\su}_{1} \in \sG_{1}, \ldots,
	\overline{\su}_{m} \in
	\sG_{m}\:\:\text{such that}\\
	(\exists \sx \in
	\sH)\quad
	\begin{cases}
		-\sum_{i=1}^{m} \sL_{i}^{*}
		\overline{\su}_{i} \in \sA \sx+\sM \sx+\sC\sx+N_{\sV}\sx \\
		(\forall i \in\{1, \ldots,
		m\})\quad  \overline{\su}_{i}
		\in\big(\sB_{i} \infconv\sN_i\infconv\sD_{i}\big)\sL_{i} \sx.
	\end{cases}
\end{multline}

In the case when $\sV=\sH$, $\sC=0$, and, for every
$i\in\{1,\ldots,m\}$,
$\sD_i^{-1}=0$, this problem can be solved by algorithms in
\cite{Comb13,CombPes12} by using
Tseng's splitting \cite{Tseng2000SIAM} in a suitable product space.
In the case when $\sV=\sH$, $\sM=0$, and, for every
$i\in\{1,\ldots,m\}$,
$\sN_i^{-1}=0$, this problem can be solved by algorithms in
\cite{Combettes2014Optimization,Vu13} by using
forward-backward splitting in a suitable product space.
Since $\mathsf{M}+\mathsf{C}$ and
$(\mathsf{N}_i^{-1}+\mathsf{D}_i^{-1})_{1\le i\le m}$ are monotone
and Lipschitzian and $N_{\mathsf{V}}$ is maximally monotone,
Problem~\ref{prob:PD} can be solved by the algorithms in
\cite{Comb13,CombPes12}. However, these methods do not exploit
the cocoercivity or the vector subspace structure of
Problem~\ref{prob:PD}.
Other algorithms as those in
\cite{Combettes2018MP,EcksteinJohnstone,Bui2021} provide
alternatives for solving
Problem~\ref{prob:PD}, but any of them exploit its vector subspace
and cocoercive structure. In the case when $\sM=0$, and, for every
$i\in\{1,\ldots,m\}$,
$\sN_i^{-1}=0$, the algorithm in \cite{bdls} exploits the vector
subspace structure of Problem~\ref{prob:PD} by using the partial
inverse of $A$ with respect to $V$.
The following result provides a fully split
algorithm to solve Problem~\ref{prob:PD} in its full generality.
It is obtained by using \eqref{e:algomain2} in a
suitable
product space, which exploits the vector subspace structure and
which activates each cocoercive operator only once by iteration.

\begin{prop}
	\label{p:PD}
	Consider the framework of Problem~\ref{prob:PD} and set
	\begin{equation}
		L=\max\{\mu,\nu_1,\ldots,\nu_m\}+\sqrt{\sum_{i=1}^m\|\mathsf{L}_i\|^2}
		\quad\text{and}\quad\beta=\min\{\zeta,\delta_1,\ldots,\delta_m\}.
	\end{equation}
	Let $\sx_0\in\sV$, let $\sy_0\in\sV^{\top}$, for every
	$i\in\{1,\ldots,m\}$, let
	$\su_{i,0}\in\sG_i$,
	set $\gamma\in\left]0,\chi\right[$,
	where $\chi$ is defined in \eqref{e:defchi},
	and consider the routine
	\begin{equation}
		\label{e:algoPD}
		\left\lfloor
		\begin{aligned}
			&\text{for }n=1,2,\ldots\\
			&\mathsf{p}_{n}=J_{{\gamma}\sA}\bigg(\sx_{n}+\gamma
			\sy_n-{\gamma}P_{\sV}\bigg((\sM+\sC)\sx_{n}+
			\sum_{i=1}^m\sL_i^*\su_{i,n}\bigg)\bigg)\\
			&\sq_n=P_{\sV}\mathsf{p}_n\\
			&\left\lfloor
			\begin{aligned}
				&\text{for }i=1,\ldots,m\\
				&\sr_{i,n}=J_{\gamma\sB_{i}^{-1}}\Big(\su_{i,n}-\gamma
				\big((\sN_{i}^{-1}+\sD_{i}^{-1})\su_{i,n}-\sL_i\sx_n\big)\Big)\\
				&\su_{i,n+1}=\mathsf{r}_{i,n}-\gamma\big(\sN_i^{-1}\mathsf{r}_{i,n}
				-\sN_i^{-1}\mathsf{u}_{i,n}-\sL_i(\sq_n-\sx_n)\big)
			\end{aligned}
			\right.\\
			&\sx_{n+1}=\sq_n-\gamma P_{\sV}\bigg(\sM \sq_n-\sM
			\sx_{n}+\sum_{i=1}^m\sL_i^*(\mathsf{r}_{i,n}-\su_{i,n})\bigg)\\
			&\sy_{n+1}=\sy_{n}-\frac{\mathsf{p}_n-\sq_n}{\gamma}.
		\end{aligned}
		\right.
	\end{equation}
	Then, $(\sx_n)_{n\in\N}$ is a sequence in $\sV$ and there exists
	$(\overline{\sx},\overline{\su}_1,\ldots,\overline{\su}_m)\in\boldsymbol{Z}$
	such that $\sx_n\weakly\overline{\sx}$ and,
	for every $i\in\{1,\ldots,m\}$, $\su_{i,n}\weakly\overline{\su}_i$.
\end{prop}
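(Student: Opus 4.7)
The plan is to recast Problem~\ref{prob:PD} as an instance of Problem~\ref{prob:main} in the product Hilbert space $\boldsymbol{\sH}=\sH\oplus\sG_1\oplus\cdots\oplus\sG_m$ with vector subspace $\boldsymbol{\sV}=\sV\oplus\sG_1\oplus\cdots\oplus\sG_m$, and then to apply Corollary~\ref{teo:main2}. Note that $\boldsymbol{\sV}^{\bot}=\sV^{\bot}\oplus\{0\}\oplus\cdots\oplus\{0\}$ and that $P_{\boldsymbol{\sV}}$ acts as $P_{\sV}$ on the first coordinate and as the identity on the others. For every $\boldsymbol{\sx}=(\sx,\su_1,\ldots,\su_m)\in\boldsymbol{\sH}$, I would define
\begin{align*}
\boldsymbol{\sA}\boldsymbol{\sx}&=\sA\sx\times\sB_1^{-1}\su_1\times\cdots\times\sB_m^{-1}\su_m,\\
\boldsymbol{\sB}\boldsymbol{\sx}&=\bigg(\sM\sx+\sum_{i=1}^m\sL_i^*\su_i,\,\sN_1^{-1}\su_1-\sL_1\sx,\,\ldots,\,\sN_m^{-1}\su_m-\sL_m\sx\bigg),\\
\boldsymbol{\sC}\boldsymbol{\sx}&=(\sC\sx,\sD_1^{-1}\su_1,\ldots,\sD_m^{-1}\su_m),
\end{align*}
so that $N_{\boldsymbol{\sV}}\boldsymbol{\sx}=N_{\sV}\sx\times\{0\}\times\cdots\times\{0\}$ and the target inclusion on $\boldsymbol{\sH}$ is $0\in(\boldsymbol{\sA}+\boldsymbol{\sB}+\boldsymbol{\sC}+N_{\boldsymbol{\sV}})\boldsymbol{\sx}$.

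First I would verify the three structural hypotheses required by Corollary~\ref{teo:main2}. The operator $\boldsymbol{\sA}$ is maximally monotone as a product of maximally monotone operators (each $\sB_i^{-1}$ being maximally monotone iff $\sB_i$ is). The operator $\boldsymbol{\sC}$ is $\beta$-cocoercive, since each $\sD_i^{-1}$ is $\delta_i$-cocoercive as the inverse of a $\delta_i$-strongly monotone operator and coordinatewise cocoercivity descends to the product with the worst constant. For $\boldsymbol{\sB}$, I would split $\boldsymbol{\sB}=\boldsymbol{\sB}_{\rm d}+\boldsymbol{\sS}$, where the block-diagonal part $\boldsymbol{\sB}_{\rm d}=\sM\oplus\sN_1^{-1}\oplus\cdots\oplus\sN_m^{-1}$ is monotone and $\max\{\mu,\nu_1,\ldots,\nu_m\}$-Lipschitzian, and $\boldsymbol{\sS}(\sx,\su_1,\ldots,\su_m)=(\sum_i\sL_i^*\su_i,-\sL_1\sx,\ldots,-\sL_m\sx)$ is a bounded linear skew operator, hence monotone, with $\|\boldsymbol{\sS}\|\le\sqrt{\sum_i\|\sL_i\|^2}$ by Cauchy--Schwarz across the dual blocks. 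Adding the two Lipschitz estimates delivers the constant $L$ stated in the proposition.

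Next I would identify the zero set. Writing $0\in(\boldsymbol{\sA}+\boldsymbol{\sB}+\boldsymbol{\sC}+N_{\boldsymbol{\sV}})(\sx,\su_1,\ldots,\su_m)$ coordinate-by-coordinate yields $\sx\in\sV$ together with $0\in\sA\sx+\sM\sx+\sC\sx+\sum_i\sL_i^*\su_i+N_{\sV}\sx$ in the first block, and $0\in\sB_i^{-1}\su_i+\sN_i^{-1}\su_i+\sD_i^{-1}\su_i-\sL_i\sx$ in each dual block; these are exactly the lines of \eqref{e:KTinc}. Thus $\boldsymbol{Z}\neq\varnothing$ supplies a zero of the product inclusion, and Corollary~\ref{teo:main2} applies with $L$ and $\beta$ as above and the constant $\chi$ given by \eqref{e:defchi}.

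Finally I would translate \eqref{e:algomain2} back to the original variables. Setting $\boldsymbol{\sx}_n=(\sx_n,\su_{1,n},\ldots,\su_{m,n})\in\boldsymbol{\sV}$ and $\boldsymbol{\sy}_n=(\sy_n,0,\ldots,0)\in\boldsymbol{\sV}^{\bot}$, the resolvent $J_{\gamma\boldsymbol{\sA}}$ splits blockwise into $J_{\gamma\sA}$ and the $J_{\gamma\sB_i^{-1}}$, so the first block of $\boldsymbol{\sp}_n$ is $\mathsf{p}_n$ and the $i$-th block is $\mathsf{r}_{i,n}$; applying $P_{\boldsymbol{\sV}}$ replaces the first block by $\sq_n=P_{\sV}\mathsf{p}_n$ and leaves the others unchanged. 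Substituting $\boldsymbol{\sB}\boldsymbol{\sx}_n$ and $\boldsymbol{\sB}P_{\boldsymbol{\sV}}\boldsymbol{\sp}_n$ into the last three lines of \eqref{e:algomain2} and reading off coordinates recovers exactly the primal, dual and $\sy$-updates of \eqref{e:algoPD}. Weak convergence of $\sx_n$ and of each $\su_{i,n}$ (and of $\sy_n$) then follows from that of $\boldsymbol{\sx}_n$ (and of $\boldsymbol{\sy}_n$) by weak continuity of the coordinate projections. The main obstacle I anticipate is the Lipschitz bound on $\boldsymbol{\sB}$: a naive estimate of the skew part $\boldsymbol{\sS}$ would produce $\sum_i\|\sL_i\|$ rather than the sharper $\sqrt{\sum_i\|\sL_i\|^2}$, and the latter is what feeds the stated $L$ and preserves a workable range of step-sizes $\gamma\in\left]0,\chi\right[$; everything else is bookkeeping.
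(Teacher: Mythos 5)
Your proposal is correct and follows essentially the same route as the paper: the same product space $\sH\oplus\sG_1\oplus\cdots\oplus\sG_m$, the same operators $A$, $B$, $C$ and subspace $V$, identification of the zero set with $\boldsymbol{Z}$, and reduction of \eqref{e:algoPD} to \eqref{e:algomain2} so that Corollary~\ref{teo:main2} yields the convergence. The only difference is cosmetic: you verify the Lipschitz constant of $B$ (via the diagonal-plus-skew splitting) and the $\beta$-cocoercivity of $C$ directly, whereas the paper cites \cite[eq.~(3.11)]{CombPes12} and \cite[eq.~(3.12)]{Vu13} for these facts.
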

\begin{proof}
	Set $\H=\sH\oplus\sG_1\oplus\cdots\oplus\sG_m$ and define
	\begin{equation}
		\begin{cases}
			A\colon\H\to 2^{\H}\colon (\sx,\su_1,\ldots,\su_m)\mapsto
			\sA\sx\times
			\sB_1^{-1}\su_1\times\cdots\times \sB_m^{-1}\su_m\\
			B\colon\H\to \H\colon (\sx,\su_1,\ldots,\su_m)\mapsto
			\big(\sM\sx+\sum_{i=1}^m\sL_i^*\su_i,\sN_1^{-1}\su_1-\sL_1\sx,\ldots,
			\sN_m^{-1}\su_m-\sL_m\sx\big)\\
			C\colon\H\to \H\colon (\sx,\su_1,\ldots,\su_m)\mapsto
			(\sC\sx,\sD_1^{-1}\su_1,\ldots,\sD_m^{-1}\su_m)\\
			V=\menge{(\sx,\su_1,\ldots,\su_m)\in\H}{\sx\in\sV}.
		\end{cases}
	\end{equation}
	Then, $A$ is maximally monotone and
	$B$ is monotone and $L$-Lipschitzian
	\cite[eq.(3.11)]{CombPes12},
	$C$ is $\beta$-cocoercive \cite[eq.(3.12)]{Vu13},
	and $V$ is a closed vector subspace of $\H$. Therefore,
	Problem~\ref{prob:PD} is a particular instance of
	Problem~\ref{prob:main}. Moreover, we have from
	\cite[Proposition~23.18]{bauschkebook2017} that
	\begin{equation}
		\begin{cases}
			(\forall \gamma>0)\quad J_{\gamma A}\colon
			(\sx,\su_1,\ldots,\su_m)\mapsto
			(J_{\gamma\sA}\sx,J_{\gamma\sB_1^{-1}}\su_1,\ldots,
			J_{\gamma\sB_m^{-1}}\su_m)\\
			P_{V}\colon
			(\sx,\su_1,\ldots,\su_m)\mapsto (P_{\sV}\sx,\su_1,\ldots,\su_m).
		\end{cases}
	\end{equation}
	Altogether, by defining
	\begin{equation}
		(\forall n\in\N)\quad
		\begin{cases}
			x_n=(\sx_n,\su_{1,n},\ldots,\su_{m,n})\\
			y_n=(\sy_n,0,\ldots,0)\\
			p_n=(\mathsf{p}_n,\sr_{1,n},\ldots,\sr_{m,n})\\
			q_n=(\sq_n,\mathsf{s}_{1,n},\ldots,\mathsf{s}_{m,n}),
		\end{cases}
	\end{equation}
	\eqref{e:algoPD} is a particular case of \eqref{e:algomain}
	and the convergence follows from Corollary~\ref{teo:main2}.
\end{proof}
\begin{rem}
	In the particular case when $\sV=\sH$ and
	$\sC=\sD_1^{-1}=\cdots=\sD_m^{-1}=0$, Proposition~\ref{p:PD}
	recovers the main result in \cite[Theorem~3.1]{CombPes12} in
	the
	error-free case. By including non-standard metrics in the space
	$\H$
	as in \cite{Arias2017A},
	we can also recover \cite{bdls} when
	$\sM=\sN_1^{-1}=\cdots=\sN_m^{-1}=0$ and \cite{Vu13} if we
	additionally assume that $\sV=\sH$, but we preferred to
	avoid this generalization for simplicity.
\end{rem}

We now provide two important examples of Problem~\ref{prob:PD}
and Proposition~\ref{p:PD} in the context of convex optimization.

\begin{ejem}
	Suppose that $\sA=\partial\mathsf{f}$,
	$\sM=\sN_1^{-1}=\cdots=\sN_m^{-1}=0$,
	$\sC=\nabla\mathsf{h}$, for
	every $i\in \{1,\ldots,m\}$,
	$\sD_i=\partial\ell_i$ and $\sB_i=\partial\sg_i$, where
	$\mathsf{f}\in\Gamma_0(\sH)$, $\mathsf{h}\colon\sH\to\R$ is
	convex differentiable with $\zeta^{-1}$-Lipschitzian gradient,
	for every $i\in \{1,\ldots,m\}$, $\ell_i\in\Gamma_0(\sG_i)$ is
	$\nu_i$-strongly convex and $\sg_i\in\Gamma_0(\sG_i)$.
	Then under the qualification condition
	\cite[Proposition~4.3(i)]{CombPes12}
	\begin{equation}
		\label{e:condqual}
		(0,\ldots,0)\in\sri\Big(\times_{i=1}^m\big(\mathsf{L}_i(\sV\cap\dom
		\mathsf{f})-(\dom\mathsf{g}_i+\dom{\ell}_i)\big)\Big),
	\end{equation}
	Problem~\ref{prob:PD} is equivalent to
	\begin{equation}
		\label{e:probgen}
		\min_{\sx\in\sV}\bigg(\mathsf{f}(\sx)+\mathsf{h}(\sx)
		+\sum_{i=1}^m(\sg_i\infconv\ell_i)(\sL_i\sx)\bigg),
	\end{equation}
	which, in view of Proposition~\ref{p:PD}, can be solved by the
	algorithm
	\begin{equation}
		\label{e:algoPDop}
		\left\lfloor
		\begin{aligned}
			&\text{for }n=1,2,\ldots\\
			&\mathsf{p}_{n}=\prox_{{\gamma}\mathsf{f}}\bigg(\sx_{n}
			+\gamma
			\sy_n-{\gamma}P_{\sV}\bigg(\nabla\mathsf{h}(\sx_{n})+
			\sum_{i=1}^m\sL_i^*\su_{i,n}\bigg)\bigg)\\
			&\sq_n=P_{\sV}\mathsf{p}_n\\
			&\left\lfloor
			\begin{aligned}
				&\text{for }i=1,\ldots,m\\
				&\sr_{i,n}=\prox_{\gamma\sg_{i}^*}\big(\su_{i,n}-\gamma
				(\nabla\mathsf{\ell}_i^*(\su_{i,n})-\sL_i\sx_n)\big)\\
				&\su_{i,n+1}=\mathsf{r}_{i,n}+\gamma \sL_i(\sq_n-\sx_n)
			\end{aligned}
			\right.\\
			&\sx_{n+1}=\sq_n-\gamma
			P_{\sV}\bigg(\sum_{i=1}^m\sL_i^*(\mathsf{r}_{i,n}-\su_{i,n})\bigg)\\
			&\sy_{n+1}=\sy_{n}-\frac{\mathsf{p}_n-\sq_n}{\gamma},
		\end{aligned}
		\right.
	\end{equation}
	where $\sx_0 \in V$, $\sy_0 \in V^\bot$, for
	every
	$i\in\{1,\ldots,m\}$, $\su_{i,0} \in \sG_i$, $L = \sqrt{\sum_{i=1}^m\|
		\mathsf{L}_i\|^2}$, $\beta=\min\{\zeta, \delta_i,\ldots,\delta_m\}$,
	$\chi$ is defined in \eqref{e:defchi},
	and $\gamma\in \left]0,\chi\right[$.
	Observe that the algorithm \eqref{e:algoPDop} exploits the
	cocoercivity of $\nabla \mathsf{h}$ and $(\nabla\ell_i^*)_{1\le i\le
		m}$
	by implementing them only once by iteration a difference of
	\cite[Theorem~4.2]{CombPes12}, which needs to implement
	them
	twice by iteration.
\end{ejem}

\begin{ejem}
	\label{ex:teonum}
	Consider the convex minimization problem
	\begin{equation}
		\label{e:probvu}
		\min_{\mathpzc{x}\in\mathpzc{H}}
		\big(\mathpzc{f}(\mathpzc{x})+\mathpzc{g}(\mathpzc{L}\mathpzc{x})
		+\mathpzc{h}(\mathpzc{A}\mathpzc{x})\big),
	\end{equation}
	where $\mathpzc{H}$, $\mathpzc{G}$, and $\mathpzc{K}$ are
	real Hilbert spaces,
	$\mathpzc{f}\in\Gamma_0(\mathpzc{H})$,
	$\mathpzc{g}\in\Gamma_0(\mathpzc{G})$,
	$\mathpzc{L}\colon\mathpzc{H}\to\mathpzc{G}$,
	$\mathpzc{A}\colon\mathpzc{H}\to\mathpzc{K}$,
	$\mathpzc{h}\colon\mathpzc{K}\to\RR$ is convex, differentiable
	with $\beta^{-1}$-Lipschitzian
	gradient, and suppose that
	\begin{equation}
		\label{e:CQ1}
		0\in\sri(\mathpzc{L}\,\dom\mathpzc{f}-\dom\mathpzc{g}).
	\end{equation}
	Note that
	$\mathpzc{h}\circ \mathpzc{A}$ is convex, differentiable, and
	$\nabla (\mathpzc{h}\circ \mathpzc{A})=\mathpzc{A}^*\circ
	\nabla \mathpzc{h}\circ \mathpzc{A}$ is
	$\beta^{-1}\|\mathpzc{A}\|^2-$Lipschitzian. Then,
	\eqref{e:probvu} can be solved
	by the primal-dual algorithm proposed in \cite{Condat13,Vu13},
	whose convergence is guaranteed under the assumption
	\begin{equation}
		\sigma\|\mathpzc{L}\|^2\le\frac{1}{\tau}-\frac{\|\mathpzc{A}\|^2}{2\beta},
	\end{equation}
	where $\tau>0$ and $\sigma>0$ are primal and dual step-sizes,
	respectively. Observe that, when $\|\mathpzc{A}\|$ is large, this
	method
	is forced to choose small primal and dual step-sizes in order to
	ensure
	convergence. To overcome this inconvenient, we propose the
	following formulation
	\begin{equation}
		\min_{\sx\in\sV}\big(\mathsf{f}(\sx)+\mathsf{h}(\sx)+\sg(\sL\sx)\big),
	\end{equation}
	where
	\begin{equation}
		\label{e:defserif}
		\begin{cases}
			\sH=\mathpzc{H}\oplus\mathpzc{K}\\
			\sG=\mathpzc{G}\\
			\sT\colon \sx = (\mathpzc{x},\mathpzc{w})\mapsto
			\mathpzc{A}\mathpzc{x}-\mathpzc{w}\\
			\sV=\ker\sT\\
			\mathsf{f}\colon \sx=(\mathpzc{x},\mathpzc{w})\mapsto
			\mathpzc{f}(\mathpzc{x})\\
			\mathsf{g}=\mathpzc{g}\\
			\sL\colon \sx=(\mathpzc{x},\mathpzc{w})\mapsto
			\mathpzc{L}\mathpzc{x}\\
			\mathsf{h}\colon \sx=(\mathpzc{x},\mathpzc{w})\mapsto
			\mathpzc{h}(\mathpzc{w}).
		\end{cases}
	\end{equation}
	Since in this case \eqref{e:condqual} reduces to \eqref{e:CQ1},
	\eqref{e:probvu} is a particular instance of
	\eqref{e:probgen} when $m=1$ and $\ell_1=0$. Therefore, in
	view of \cite[Example 29.19]{bauschkebook2017},
	\eqref{e:probvu} can be solved by the routine in
	\eqref{e:algoPDop} which, on this setting, reduces to:
	
	\begin{equation}
		\label{e:algoPDopex1}
		\left\lfloor
		\begin{aligned}
			&\text{for }n=1,2,\ldots\\
			&\mathpzc{p}_{1,n}=\prox_{{\gamma}\mathpzc{f}}\bigg(\mathpzc{x}_{n}+\gamma
			\mathpzc{y}_{1,n}-{\gamma}\big(\mathpzc{L}^{*}\mathpzc{u}_n
			-
			\mathpzc{A}^*\mathpzc{B}\big(\mathpzc{A}
			\mathpzc{L}^{*} \mathpzc{u}_n -\nabla \mathpzc{h}
			(\mathpzc{w}_{n}) \big) \big)\bigg)\\
			&\mathpzc{p}_{2,n}=\mathpzc{w}_{n}+\gamma
			\mathpzc{y}_{2,n}-{\gamma}\big( \nabla \mathpzc{h}
			(\mathpzc{w}_{n})+
			\mathpzc{B}\big(\mathpzc{A}
			\mathpzc{L}^{*} \mathpzc{u}_n -\nabla \mathpzc{h}
			(\mathpzc{w}_{n})  \big)\big)\\
			&\mathpzc{q}_{1,n}=\mathpzc{p}_{1,n}-
			\mathpzc{A}^*\mathpzc{B} \big(\mathpzc{A}\mathpzc{p}_{1,n}
			-\mathpzc{p}_{2,n} \big) \\
			&\mathpzc{q}_{2,n}=\mathpzc{p}_{2,n}+\mathpzc{B}\big(\mathpzc{A}\mathpzc{p}_{1,n}
			-\mathpzc{p}_{2,n} \big) \\
			&\mathpzc{r}_n=\prox_{\gamma\mathpzc{g}^*}\big(\mathpzc{u}_{n}+\gamma
			\mathpzc{L}\mathpzc{x}_n\big)\\
			&\mathpzc{u}_{n+1}=\mathpzc{r}_{n}+\gamma
			\mathpzc{L}(\mathpzc{q}_{1,n}-\mathpzc{x}_n)\\
			&\mathpzc{x}_{n+1}=\mathpzc{q}_{1,n}-\gamma
			\left(\mathpzc{L}^{*}(\mathpzc{r}_n-\mathpzc{u}_n)-
			\mathpzc{A}^*\mathpzc{B} \mathpzc{A}
			\mathpzc{L}^{*} (\mathpzc{r}_n-\mathpzc{u}_n) \right)\\
			&\mathpzc{w}_{n+1}=\mathpzc{q}_{2,n}-\gamma
			\mathpzc{B} \mathpzc{A}
			\mathpzc{L}^{*} (\mathpzc{r}_n-\mathpzc{u}_n)\\
			&\mathpzc{y}_{1,n+1}=\mathpzc{y}_{1,n}-\frac{\mathpzc{p}_{1,n+1}
				-\mathpzc{q}_{1,n+1}}{\gamma}\\		
			&\mathpzc{y}_{2,n+1}=\mathpzc{y}_{2,n}-
			\frac{\mathpzc{p}_{2,n+1}-\mathpzc{q}_{2,n+1}}{\gamma},
		\end{aligned}
		\right.
	\end{equation}
	where $\mathpzc{B}=(\id+\mathpzc{A}\mathpzc{A}^*)^{-1}$
	can
	be computed only once before the loop,
	$(\mathpzc{x}_0,\mathpzc{w}_0)
	\in V$, $(\mathpzc{y}_{1,0},\mathpzc{y}_{2,0}) \in V^{\bot}$,
	$\mathpzc{u}_{0} \in \mathpzc{G}$, $L=\|\mathpzc{L}\|$,
	$\chi$ is defined in \eqref{e:defchi},
	and  $\gamma\in \left]0,\chi\right[$.
\end{ejem}
\section{Numerical experiments}\label{sec:numerical}
In this section we consider the following optimization problem
\begin{equation}
	\label{eq:numericproblem}
	\min_{ \mathpzc{y}^0 \leq \mathpzc{x}\leq \mathpzc{y}^1}
	\left(\frac{\alpha_1}{2}\|\mathpzc{A}\mathpzc{x}-\mathpzc{z}\|^2 +
	\alpha_2 \|\nabla \mathpzc{x}\|_1\right),
\end{equation}
where $\mathpzc{y}^0=(\eta_i^0)_{1\le i\le N}$,
$\mathpzc{y}^1=(\eta_i^1)_{1\le i\le N}$ are vectors in $\R^N$,
$\alpha_1$ and $\alpha_2$ are in $\RPP$,
$\mathpzc{A}\in
\R^{K\times N}$, $\mathpzc{z} \in \R^K$, and
$\nabla: \R^N \to \R^{N-1} : (\xi_i)_{1\leq i\leq N} \mapsto
(\xi_{i+1}-\xi_i)_{1\leq i\leq N-1}$ is the discrete gradient. This problem
appears when computing the fusion estimator in fused LASSO
problems \cite{Friedman2007,Ohishi2021,Tibshirani2005}.

Note that \eqref{eq:numericproblem} can be written equivalently as
\eqref{e:probvu}, where
\begin{equation}
	\begin{cases}
		\mathpzc{H}=\R^N \\
		\mathpzc{f}=\iota_{\mathpzc{C}}\\
		\mathpzc{C}=\times_{i=1}^N[\eta_i^0,\eta_i^1]\\
		\mathpzc{g}=\alpha_2 \|\cdot\|_1\\
		\mathpzc{h}=\alpha_1\|\cdot-\mathpzc{z}\|^2/2\\
		\mathpzc{L}=\nabla
	\end{cases}
\end{equation}
Since $\mathpzc{f}\in\Gamma_0(\R^N)$, $\mathpzc{g}\in\Gamma_0(\R^{N-1})$,
$\mathpzc{h}$ is convex, differentiable,
$\nabla \mathpzc{h}= \alpha_1(\id-\mathpzc{z})$ is
$\alpha_1-$Lipschitzian,
$\|\mathpzc{L}\|=2$, and \eqref{e:CQ1} is trivially satisfied,
\eqref{eq:numericproblem} is a particular instance of
Example~\ref{ex:teonum}. Hence, \eqref{eq:numericproblem}
can be solved by  the algorithm in
\cite{Condat13,Vu13} (called Condat-V\~u), by
\eqref{e:algoPDopex1} (called FPIHF), and by
\cite{Briceno2015JOTA} (called FPIF), which are compared in this
section.
In this context,
the Algorithm {Condat-V\~u} \cite{Condat13,Vu13} reduces to the
following routine.
\begin{algorithm}[H]
	\caption{Condat-V\~u \cite{Condat13,Vu13}}
	\label{alg:cv}
	\begin{algorithmic}[1]
		\STATE{Let $\mathpzc{x}_0\in\R^{N}$ and $\mathpzc{u}_0 \in
			\R^{N-1}$,
			let $(\sigma,\tau,\rho)
			\in \RPP^3$, and fix $\epsilon_0 > \varepsilon>0$. }
		\WHILE{ $\epsilon_n > \varepsilon $}	
		\STATE{$\mathpzc{p}_{n+1}=P_\mathpzc{C}\big(\mathpzc{x}_n
			-\tau(\alpha_1
			\mathpzc{A}^{\top}(\mathpzc{A}\mathpzc{x}_n-z)+\nabla^{\top}\mathpzc{u}_n)\big)$}
		\STATE{$\mathpzc{q}_{n+1}=\sigma (\id-\prox_{
				\alpha_2\|\cdot\|_1/\sigma})
			(\mathpzc{u}_n/\sigma+\nabla
			(2\mathpzc{p}_{n+1}-\mathpzc{x}_n))$}
		\STATE{$\mathpzc{x}_{n+1}=\mathpzc{x}_n+\rho(\mathpzc{p}_{n+1}-\mathpzc{x}_n)$}
		\STATE{$\mathpzc{u}_{n+1}=\mathpzc{u}_n+\rho(\mathpzc{q}_{n+1}-\mathpzc{u}_n)$}
		\STATE{$\epsilon_{n+1}=\mathcal{R}\big((\mathpzc{x}_{n+1},\mathpzc{u}_{n+1}),
			(\mathpzc{x}_n,\mathpzc{u}_{n})\big)$}
		\ENDWHILE
		\RETURN{$(\mathpzc{x}_{n+1},\mathpzc{u}_{n+1})$}
	\end{algorithmic}
\end{algorithm}
Observe that $P_{\mathpzc{C}}\colon(\xi_i)_{1\le i\le
	N}\mapsto(\max\{\min\{\xi_{i},\eta_i^1\},\eta_i^0\})_{1\leq i\leq
	N}$.
The convergence of Algorithm~\ref{alg:cv} is guaranteed if
\begin{equation}\label{eq:tsvu}
	\sigma\|\mathpzc{L}\|^2\le\frac{1}{\tau}-\frac{\alpha_1
		\|\mathpzc{A}\|^2}{2}\quad\text{and}\quad
	\rho\in\left]0,\delta\right[,
	\quad\text{where}\:\:\delta=2-\frac{\alpha_1 \|\mathpzc{A}\|^2}
	{2(\frac{1}{\tau}-\sigma\|\mathpzc{L}\|^2)}.
\end{equation}
Note that, the larger is $\alpha_1 \|\mathpzc{A}\|^2$, the smaller
should be
$\tau$ and $\sigma$ in order to achieve convergence.
On the other hand, by considering $\mathsf{T}$
defined in \eqref{e:defserif}, the method in \eqref{e:algoPDopex1} writes as follows.
\begin{algorithm}[H]
	\caption{Forward-partial inverse-half-forward splitting (FPIHF)}
	\label{alg:TV}
	\begin{algorithmic}[1]
		\STATE{Set $\mathpzc{B}=(\id  + \mathpzc{A}
			\mathpzc{A}^\top)^{-1}$, let $(\mathpzc{x}_0,\mathpzc{w}_0) \in
			(\ker\mathsf{T})^2 $, $(\mathpzc{y}_{1,0},\mathpzc{y}_{2,0}) \in
			(\ker\mathsf{T}^\bot)^2 $, $\mathpzc{u}_0 \in \R^K $, let
			$\displaystyle
			\gamma \in
			\RPP$, and fix $\epsilon_0> \varepsilon>0$.}
		\WHILE{ $\epsilon_n > \varepsilon $}	
		\STATE{$\mathpzc{p}_{1,n}=P_\mathpzc{C}\big(\mathpzc{x}_{n}
			+\gamma\mathpzc{y}_{1,n}-{\gamma}\big(\nabla^{\top}
			\mathpzc{u}_n-
			\mathpzc{A}^\top\mathpzc{B} \big(\mathpzc{A} \nabla^{\top}
			\mathpzc{u}_n
			-\alpha_1(\mathpzc{w}_{n}-\mathpzc{z}) \big) \big)\big)$}
		\STATE{$\mathpzc{p}_{2,n}=\mathpzc{w}_{n}+\gamma
			\mathpzc{y}_{2,n}-{\gamma}\big(
			\alpha_1(\mathpzc{w}_{n}-\mathpzc{z})+ \mathpzc{B}
			\big(\mathpzc{A} \nabla^{\top} \mathpzc{u}_n -
			\alpha_1(\mathpzc{w}_{n}-\mathpzc{z})  \big)\big)$}
		\STATE{$\mathpzc{q}_{1,n}=\mathpzc{p}_{1,n}-   \mathpzc{A}^\top
			\mathpzc{B} (\mathpzc{A}\mathpzc{p}_{1,n} -\mathpzc{p}_{2,n}
			\big)$}
		\STATE{$\mathpzc{q}_{2,n}=\mathpzc{p}_{2,n}+   \mathpzc{B}
			\big(\mathpzc{A}\mathpzc{p}_{1,n} -\mathpzc{p}_{2,n} \big)$}
		\STATE{$\mathpzc{r}_n= \gamma (\id-\prox_{
				\alpha_2\|\cdot\|_1/\gamma})(\mathpzc{u}_{n}/\gamma+\nabla
			\mathpzc{x}_n)$}
		\STATE{$\mathpzc{u}_{n+1}=\mathpzc{r}_{n}+\gamma
			\nabla(\mathpzc{q}_{1,n}-\mathpzc{x}_n)$}
		\STATE{$\mathpzc{x}_{n+1}=\mathpzc{q}_{1,n}-\gamma
			\left(\nabla^{\top}(\mathpzc{r}_n-\mathpzc{u}_n)-
			\mathpzc{A}^\top \mathpzc{B}\mathpzc{A}\nabla^{\top}
			(\mathpzc{r}_n-\mathpzc{u}_n)  \right)$}
		\STATE{$\mathpzc{w}_{n+1}=\mathpzc{q}_{2,n}-\gamma
			\mathpzc{B} \mathpzc{A} \nabla^{\top}
			(\mathpzc{r}_n-\mathpzc{u}_n) $}
		\STATE{$\mathpzc{y}_{1,n+1}=\mathpzc{y}_{1,n}
			-(\mathpzc{p}_{1,n+1}-\mathpzc{q}_{1,n+1})/\gamma$}		
		\STATE{$\mathpzc{y}_{2,n+1}=\mathpzc{y}_{2,n}-(\mathpzc{p}_{2,n+1}-\mathpzc{q}_{2,n+1})/\gamma$}
		\STATE{	
			$\epsilon_{n+1}=\mathcal{R}\big((\mathpzc{x}_{n+1},\mathpzc{w}_{n+1},\mathpzc{y}_{n+1}^1,\mathpzc{y}_{n+1}^2),
			(\mathpzc{x}_{n},\mathpzc{w}_{n},\mathpzc{y}_{n}^1,\mathpzc{y}_{n}^2)\big)$
		}
		\ENDWHILE
		\RETURN{$(\mathpzc{x}_{n+1},\mathpzc{w}_{n+1},\mathpzc{y}_{n+1}^1,\mathpzc{y}_{n+1}^2)$}
	\end{algorithmic}
\end{algorithm}
In view of Example~\ref{ex:teonum}, the algorithm  in
\eqref{e:algoPDopex1} reduces to Algorithm~\ref{alg:TV}, whose
convergence is guaranteed if the step-size $\gamma$ satisfies
\begin{equation} \label{e:gammaexp}
	0<\gamma < \chi = \frac{4}{\alpha_1+\sqrt{\alpha_1^2+64}}.
\end{equation}
Observe that the condition for the step-size $\gamma$ in
\eqref{e:gammaexp} does not
depend on $\|\mathpzc{A}\|$.

The FPIF algorithm proposed in  \cite{Briceno2015JOTA} for solving
\eqref{eq:numericproblem}
differs from Algorithm~\ref{alg:TV} in the fact that
the cocoercive gradient $\nabla\mathpzc{h}\colon \mathpzc{x}\mapsto
\mathpzc{x}-\mathpzc{z}$ is implemented twice by iteration.
Indeed, the algorithm consider the monotone Lipschitzian operator
$(\mathpzc{x},\mathpzc{u},\mathpzc{w})\mapsto
(\nabla^{\top}\mathpzc{u},
\nabla \mathpzc{x},\alpha_1(\mathpzc{w}-\mathpzc{z}))$, whose
Lipschitz
constant follows from
\begin{align*}
	&\pnorm{\big(\nabla^{\top}\mathpzc{u}_1,
		\nabla \mathpzc{x}_1,\alpha_1(\mathpzc{w}_1-\mathpzc{z})\big)
		-\big(\nabla^{\top}\mathpzc{u}_2,
		\nabla
		\mathpzc{x}_2,\alpha_1(\mathpzc{w}_2-\mathpzc{z})\big)}^2\\
	& \hspace*{4cm}=
	\|\nabla^{\top}(\mathpzc{u}_1-\mathpzc{u}_2)\|^2+
	\|\nabla (\mathpzc{x}_1-\mathpzc{x}_2)\|^2 +
	\alpha_1^2\|\mathpzc{w}_1-\mathpzc{w}_2\|^2
	\\
	&\hspace*{4cm}\leq \|\nabla^{\top}\|^2
	\|\mathpzc{u}_1-\mathpzc{u}_2\|^2+
	\|\nabla\|^2 \|\mathpzc{x}_1-\mathpzc{x}_2\|^2 +
	\alpha_1^2\|\mathpzc{w}_1-\mathpzc{w}_2\|^2\\
	&\hspace*{4cm}\leq \max\{\|\nabla\|^2,\alpha_1^2\}
	\pnorm{(\mathpzc{x}_1-\mathpzc{x}_2,\mathpzc{u}_1
		-\mathpzc{u}_2,\mathpzc{w}_1-\mathpzc{w}_2)}^2.
\end{align*}	
Therefore, the convergence of FPIF is guaranteed if
$\gamma\in\left]0,1/\max\{\|\nabla\|,\alpha_1\}\right[$, and, as in
Algorithm~\ref{alg:TV}, this condition does not depend on
$\|\mathpzc{A}\|$.
In order to compare Condat-V\~u, FPIHF, and FPIF, we set
$\alpha_1=5$ and
$\alpha_2=0.5$ and we consider
$\mathpzc{A}=\kappa \cdot \text{rand}(N,K)$,
$\mathpzc{y}^0=-1.5\cdot\text{rand}(N)$,
$\mathpzc{y}^1=1.5\cdot\text{rand}(N)$,
and $\mathpzc{z}=\text{randn}(N)$,
where $\kappa \in \{1/5,1/10,1/20,1/30\}$,
$N\in\{600,1200,2400\}$,
$K\in\{N/3,N/2,2N/3\}$,  and
$\text{rand}(\cdot,\cdot)$ and $\text{randn}(\cdot,\cdot)$ are
functions in MATLAB generating matrices/vectors with uniformly and
normal distributed
entries, respectively. For each value of $\kappa$, $N$, and $K$,
we generate
$20$ random realizations for $\mathpzc{A}$,
$\mathpzc{z}$, $\mathpzc{y}^0$, and $\mathpzc{y}^1$.
Note that the average value of
$\|\mathpzc{A}\|$ increases as $\kappa$ increase
(see Figure~\ref{fig:boxplotjuntos} for $K=N/2$), which affects
Algorithm~\ref{alg:cv} in view of \eqref{eq:tsvu}. We also set
$\rho=0.99\cdot\delta$, where $\delta$ is defined in \eqref{eq:tsvu}.
In this setting, from \eqref{e:gammaexp} we deduce that the
convergence of FPIHF is guaranteed for $\gamma < \chi \approx
0.2771$. On the other hand, since $\max\{\|\nabla\|,\alpha_1\} =
\alpha_1=5$, the  convergence of FPIF is guaranteed for $\gamma <
0.2$.
\begin{figure}[H]
	\includegraphics[scale=0.4]{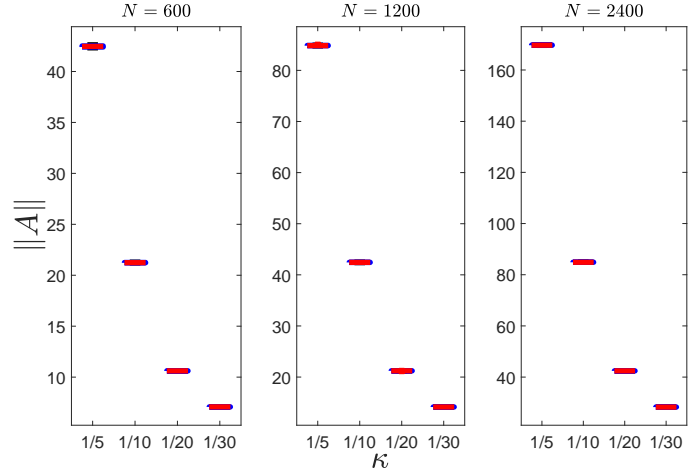}
	\caption{Box plot for the norm of the $20$ random realizations of
		$\mathpzc{A}$, $N \in \{600, 1200, 2400\}$,
		$K=N/2$.}\label{fig:boxplotjuntos}
\end{figure}

In Tables~\ref{t:IV}-\ref{t:I} we provide the
average time and number of iterations to achieve a tolerance
$\varepsilon=10^{-6}$ for each algorithm under study. In the case
when an algorithm exceeds
$50000$ iterations in all cases, we write ``$\boxtimes$'' in both
columns.
From these tables we can observe that when $\kappa$
increases (and therefore, $\|\mathpzc{A}\|$ increases), Condat-V\~u
reduces its performance and does not converge within
$50000$ iterations for big dimensions and large values of $\kappa$.
Moreover, the number of iterations of FPIHP is
considerably lower than its competitors but with expensive
computational time by iteration. This can be explained
by the fact that  FPIHP needs to compute
three projections onto the kernel of
$(\mathpzc{x},\mathpzc{w})\mapsto
\mathpzc{A}\mathpzc{x}-\mathpzc{w}$ at each iteration.
We can also perceive that, at exception of some cases, the partial
inverse-based algorithms increase their computational time to achieve
convergence when $K$ is larger. This can be explained by the fact
that the dimension of matrix $\mathpzc{B}$ is larger as $K$ is larger,
and it has to be implemented three times by iteration.

When $\kappa =1/30$, we observe from Table~\ref{t:IV},
that FPIHP and Condat-V\~u are competitive and both are more
efficient
than FPIF.
When $\kappa =1/20$,  we observe from Table~\ref{t:III} that FPIHP
outperforms Condat-V\~u and FPIF for large dimensions.
When $\kappa =1/10$, we observe from Table~\ref{t:II} that
FPIHP is the best algorithm at exception of the smallest dimensional
case in which it is competitive with Condat-V\~u. The latter does not
converge within 50000 for dimension $N=2400$.
When $\kappa =1/5$, FPIHP is the more efficient algorithm in all the
cases under study, as it is illustrated in Table~\ref{t:I}.
Moreover, Condat-V\~u converge before 50000 iterations only in the
lower
dimensional case  when $N=600$. We conclude that, for higher
values of $\|\mathpzc{A}\|$ and larger dimensions, is more convenient
to implement FPIHP.
\begin{table}
	{\caption{Comparison of Condat-V\~u, FPIF, and FPIHF for the
			case $\kappa=1/30$.\label{t:IV}}
		\begin{center}
			\begin{tabular}{cccccccc}
				\multicolumn{2}{c}{} & \multicolumn{2}{c}{$
					K=N/3$}&\multicolumn{2}{c}{$K=N/2$}
				&\multicolumn{2}{c}{ $K=2N/3$}\\  \cmidrule[0.5pt](lr){3-4}
				\cmidrule[0.5pt](lr){5-6}\cmidrule[0.5pt](lr){7-8}
				$N$ & Algorithm &  Av. time (s)& Av. iter & Av. time (s)&
				Av. iter &
				Av. time (s)& Av. iter  \\ \midrule
				\multirow{3}{*}{$600$}	&Condat-V\~u  &
				0.89  & 11059 & 0.80 & 10047 & 0.76 & 9666   \\
				&FPIF &  3.46 & 17454 & 3.91 & 14353 & 7.20 & 17430\\
				& FPIHF  &0.99 & 4851 & 1.24 & 4442 & 1.73 & 3996 \\ \midrule  	
				\multirow{3}{*}{$1200$}	&Condat-V\~u &
				11.32 & 17321 & 10.55 & 16129 &  10.54 & 16082   \\				
				&FPIF & 25.52 & 19930 & 32.37 & 13788 & 51.54 & 16443\\
				& FPIHF  & 7.07 & 5425 & 13.76 & 5838 & 23.83 & 7570\\
				\midrule
				\multirow{3}{*}{$2400$}	&Condat-V\~u  &
				74.17 & 34059 & 70.14 & 32216 & 69.48 & 31963   \\
				&FPIF & 95.55 & 17747 & 138.67 & 16074 & 190.06 & 17216\\
				& FPIHF  & 43.08 & 7961 & 64.68 & 7464 & 70.64 & 6369\\
				\midrule
			\end{tabular}
	\end{center}}
\end{table}

\begin{table}
	
	{\caption{Comparison of Condat-V\~u, FPIF, and FPIHF for the
			case  $\kappa=1/20$.\label{t:III}}
		\begin{center}
			\begin{tabular}{cccccccc}				\multicolumn{2}{c}{} &
				\multicolumn{2}{c}{$
					K=N/3$}&\multicolumn{2}{c}{$K=N/2$}
				&\multicolumn{2}{c}{ $K=2N/3$}\\   \cmidrule[0.5pt](lr){3-4}
				\cmidrule[0.5pt](lr){5-6}\cmidrule[0.5pt](lr){7-8}
				$N$ & Algorithm &  Av. time (s)& Av. iter & Av. time (s)&
				Av. iter &
				Av. time (s)& Av. iter  \\ \midrule
				\multirow{3}{*}{$600$}&Condat-V\~u  &
				0.86 & 10752 & 0.81 & 10263 & 0.87 & 10992   \\
				&FPIF  & 2.67 & 13381 & 3.91 & 14204 & 5.88 & 14258 \\
				&FPIHF  & 0.97 & 4725 & 0.82 & 2900 & 1.63 & 3747 	 \\ \midrule
				\multirow{3}{*}{$1200$}&Condat-V\~u  & 13.91
				& 21209 & 13.35 & 20359 & 12.51 & 19118  \\
				&FPIF  & 23.30 & 18142 & 45.16  & 19222 & 52.60 & 16773 \\
				&FPIHF  & 9.07 & 6943 & 20.53 & 8689 & 10.91 & 3458	 \\
				\midrule	\multirow{3}{*}{$2400$}&Condat-V\~u  &
				103.92& 47673 & 98.92 & 45543 & 91.33 & 41996   \\
				&FPIF  & 89.77 & 16659 & 132.60 & 15374 & 145.58 & 13181 \\
				&FPIHF  & 32.27 & 5957 & 45.35 & 5234 & 83.48 & 7539
				\\
				\midrule
			\end{tabular}
	\end{center}}
\end{table}

\begin{table}	
	{\begin{center}
			\caption{Comparison of Condat-V\~u, FPIF, and FPIHF for the case
				$\kappa=1/10$.\label{t:II}}
			\begin{tabular}{cccccccc}
				\multicolumn{2}{c}{} & \multicolumn{2}{c}{$
					K=N/3$}&\multicolumn{2}{c}{$K=N/2$}
				&\multicolumn{2}{c}{ $K=2N/3$}\\     \cmidrule[0.5pt](lr){3-4}
				\cmidrule[0.5pt](lr){5-6}\cmidrule[0.5pt](lr){7-8}
				$N$ & Algorithm &  Av. time (s)& Av. iter & Av. time (s)&
				Av. iter &
				Av. time (s)& Av. iter  \\ \midrule
				\multirow{3}{*}{$600$} &	Condat-V\~u
				&1.43  & 18233 & 1.30 & 16747 & 1.25 & 15577 \\
				& FPIF  & 3.56 & 18040 & 3.01 & 11057 & 5.17 & 12389\\
				& FPIHF & 1.11  & 5414 & 1.30 & 4696 & 1.49 & 3436	 \\
				\midrule
				\multirow{3}{*}{$1200$} &	Condat-V\~u  &
				30.19 & 46078 & 26.98& 41243 & 24.05 & 36849 \\
				& FPIF  & 25.61 & 19916 & 30.70 & 13095 & 40.57 & 12960\\
				& FPIHF & 6.96 & 5343 & 10.16 & 4294 & 17.79 & 5657	
				\\\midrule
				\multirow{3}{*}{$2400$} &	Condat-V\~u  &
				$\boxtimes$ & $\boxtimes$ & $\boxtimes$ & $\boxtimes$ &
				$\boxtimes$& $\boxtimes$ \\
				& FPIF  & 98.90 & 18363 & 129.27 & 14975 & 172.05 & 15609\\
				& FPIHF & 28.90 & 5349 & 46.74 & 5391 & 60.61 & 5484
				\\\midrule
			\end{tabular}
	\end{center}}
	
\end{table}

\begin{table}
	
	{\caption{Comparison of Condat-V\~u, FPIF, and FPIHF for the
			case  $\kappa=1/5$.\label{t:I}}
		\begin{center}
			\begin{tabular}{cccccccc}
				\multicolumn{2}{c}{} & \multicolumn{2}{c}{$
					K=N/3$}&\multicolumn{2}{c}{$K=N/2$}
				&\multicolumn{2}{c}{ $K=2N/3$}\\     \cmidrule[0.5pt](lr){3-4}
				\cmidrule[0.5pt](lr){5-6}\cmidrule[0.5pt](lr){7-8}
				$N$ & Algorithm &  Av. time (s)& Av. iter & Av. time (s)&
				Av. iter &
				Av. time (s)& Av. iter  \\ \midrule
				\multirow{3}{*}{$600$}
				& Condat-V\~u & 3.76 & 48078 & 3.27 & 40998  & 2.58 &
				33226  \\
				&FPIF  & 2.68 & 13527 & 3.31 & 11945 & 4.14 & 9840 \\
				&FPIHF  & 0.50 & 2428 & 0.64 & 2263 & 0.79 & 1780	 \\
				\midrule
				\multirow{3}{*}{$1200$}&
				Condat-V\~u & $\boxtimes$ & $\boxtimes$ & $\boxtimes$
				& $\boxtimes$ & $\boxtimes$ & $\boxtimes$  \\
				&FPIF & 21.26 & 16535 & 27.29 & 11627 & 35.55 &
				11399\\
				&FPIHF & 7.23 & 5529  & 5.72 & 2424 & 10.25 &	3257 \\
				\midrule
				\multirow{3}{*}{$2400$}& Condat-V\~u & $\boxtimes$
				& $\boxtimes$ &  $\boxtimes$ & $\boxtimes$ &
				$\boxtimes$ & $\boxtimes$  \\
				&FPIF  & 88.51 & 16392 & 124.71 & 14444 & 139.69 &
				12653\\
				&FPIHF  & 23.95 & 4414 & 35.51 & 4102 & 41.38 &
				3773	 \\ \midrule
			\end{tabular}
	\end{center}}
\end{table}

\section*{Acknowledgment}
The first author thanks the support of  Centro de Modelamiento
Matemático (CMM), ACE210010 and FB210005, BASAL funds for
centers of excellence and ANID under grant FONDECYT 1190871
from ANID-Chile. The third author thanks the support
of ANID-Subdirección de Capital Humano/Doctorado
Nacional/2018-21181024 and by the Direcci\'on de Postgrado y Programas from
UTFSM through Programa de Incentivos a la Iniciaci\'on Cient\'ifica
(PIIC). The forth author thanks the support of the National Natural Science Foundations of China (12061045, 11661056).

%\bibliographystyle{siamplain}
%\bibliography{klreference-en}

\end{document}